\documentclass{article}
\usepackage[a4paper, total={6in, 8in}]{geometry}
\usepackage{amsmath}
\usepackage{amssymb}
\usepackage{amsthm}
\usepackage{multirow}
\usepackage{booktabs}
\usepackage{hyperref}
\usepackage{array}
\usepackage[ruled,vlined,linesnumbered]{algorithm2e}
\usepackage[all,pdf]{xy}
\usepackage{bm}
\usepackage{extarrows}

\usepackage{graphicx}
\graphicspath{{./}{./fig/}}
\usepackage{subfigure}

\usepackage{xcolor}
\DeclareMathOperator*{\argmin}{argmin}
\DeclareMathOperator*{\card}{card}

\newtheorem{theorem}{Theorem}

\newtheorem{lemma}{Lemma}

\newtheorem{assumption}{Assumption}
\newtheorem{remark}{Remark}
\newtheorem{example}{Example}

% \numberwithin{table}{subsection}
\usepackage{diagbox} % pi for diagonal table

\renewcommand{\bf}{\pmb}
\newcommand{\bx}{\boldsymbol{x}}
\newcommand{\nrm}[1]{\left\Vert {#1} \right\Vert}

\newcommand{\Ebb}{\mathbb{E}}

\newcommand{\Abf}{\mathbf{A}}
\newcommand{\bbf}{\mathbf{b}}

\newcommand{\Rbf}{\mathbf{R}}

\usepackage{authblk}

\title{Weak Collocation Regression method: fast reveal hidden stochastic dynamics from high-dimensional aggregate data}

\author[1]{Liwei Lu}
\author[2]{Zhijun Zeng}
\author[3]{Yan Jiang}
\author[4]{Yi Zhu\thanks{ yizhu@tsinghua.edu.cn}}
\author[5]{Pipi Hu\thanks{Corresponding author, pisquare@microsoft.com}}

% \affil[1,3,4]{Department of Mathematical Sciences, Tsinghua, Beijing, 100084, China.}
\affil[1,2,3,4]{Yau Mathematical Sciences Center, Tsinghua University, Beijing, 100084, China.}
\affil[4]{Yanqi Lake Beijing Institute of Mathematical Sciences and Applications, Beijing, 101408, China.}
\affil[5]{Microsoft Research AI4Science, Beijing, 100080, China.}

\date{\today}

\begin{document}
\maketitle

\begin{abstract}
Revealing hidden dynamics from the stochastic data is a challenging problem as the randomness takes part in the evolution of the data.  The problem becomes exceedingly hard if the trajectories of the stochastic data are absent in many scenarios. In this work, we propose the Weak Collocation Regression (WCR) method to learn the dynamics from the stochastic data without the labels of trajectories. This method utilize  the  governing equation of the probability distribution function--the Fokker-Planck (FP) equation. Using its weak form and integration by parts, we move all the spacial derivatives of the distribution function to the test functions which can be computed explicitly. Since the data is a sampling of the corresponding distribution function, we can compute the integrations in the weak form, which has no spacial derivatives on the distribution functions, by simply adding the values of the integrands 
at the data points. We further assume the unknown drift and diffusion terms can be expanded by the base functions in a dictionary with the coefficients to be determined. Cooperating  the collocation treatment and linear multi-step methods, we transfer the revealing process to a linear algebraic system. Using the sparse regression, we eventually obtain the unknown coefficients and hence the hidden stochastic dynamics. The numerical experiments show that our method is flexible and fast, which reveals the dynamics within seconds in the multi-dimensional problems and can be extended to high dimensional data. The complex tasks with variable-dependent diffusion and coupled drift can be correctly identified by WCR and the performance is robust, achieving high accuracy in the cases of noisy data. The rigorous error estimate is also included to support our various numerical experiments. 

%Here we propose an approach to the effective modeling of the dynamics of the stochastic data without trajectories based on the weak form of the Fokker-Planck (FP) equation, which governs the evolution of the distribution function in the Brownian process. Taking the collocations of Gaussian functions as the test functions in the weak form of the FP equation, we transfer the derivatives to the Gaussian functions and thus approximate the weak form by the summation over the data. With a dictionary representation of the unknown terms, a linear system is built and then solved by the regression, revealing the unknown dynamics of the data. Hence, we name the method the Weak Collocation Regression (WCR) method for its three key components: weak form, collocation of Gaussian kernels and regression.

\end{abstract}

\begin{keywords}
 weak form, collocation of kernels, Fokker-Planck equation, aggregate data
\end{keywords}

\section{Introduction}

Nowadays, a large amount of data has been collected in different realms, and revealing the hidden dynamics buried in the data is an essential topic in the scientific discovery and engineering applications. On one hand, the studies in the past several centuries have proven the success of the differential equations derived from the so-called first principle in the descriptions of the phenomena of nature, such as the Navier-Stokes equation in hydrodynamics for fluid dynamics \cite{temam2001navier}, Schr\"{o}dinger equation in quantum mechanics for probability current \cite{ballentine2014quantum}, Black-Scholes in computational finance for option pricing \cite{klein1996pricing}. 
On the other hand, machine learning, especially deep learning, in recent years has attained tremendous success in computer vision, natural language processing, and many other topics in computer science \cite{goodfellow2016deep}. 
Leveraging the structure of differential equations and machine learning in the data analysis shows its prospective performance in many studies \cite{karniadakis2021physics, raissi2020hidden, brunton2022data, hu2022revealing}. One of the most important focus is pouring new structures onto modeling the hidden dynamics from data. The physical informed neural network (PINN) adds physical constraints to the data by adding the residual of the differential equations to the loss and making learning coefficients of the unknown terms of the governing equations reliable \cite{raissi2019physics}. Brunton \textit{et al.} proposed a framework named ``SINDy'' by combining regression and sparse identification to reveal nonlinear dynamical systems \cite{kutz16sparse}. The time-series data always contain a lot of missing points and even flaws with high noisy level, making analysis hard and tricky. Hu \textit{et al.} proposed using symbolic ODE (ordinary differential equations) to reveal hidden dynamics from time series data \cite{hu2022revealing} with the integral form making the learning process of the data with large time step more stable. More related works about inferring differential equations, see \cite{long2018pde,long2019pde, both2021deepmod, chen2021physics, lin2021binet, lou2021physics, lu2019deeponet}.   
Most of these works leverage the structure of differential equations and machine learning schemes to model the data in the deterministic system. However, stochastic dynamics plays a vital role in the applications such as the Brownian motion in the molecular modeling and the financial predictions \cite{dufresne2001integral}. In contrast to the ``noise" of data generated from the measurement, the randomness in the stochastic dynamics contributes the evolution of the process, making it more complicated for modeling. Fortunately, with subtly designed structure, it is still possible to reveal the hidden dynamics from the stochastic data.

In the realm of stochastic dynamics, aggregate data refers to a data format in which the full trajectory of each individual modeled by the evolution of state is not available, but rather a sample from the distribution of state at a certain time point is available \cite{ma2021learning}. For example, the data collected for single-cell DNA sequence analysis, bird migration, and social gathering are aggregate data as the individual trajectories for a long time are not possible to follow with only the collection of the states of different individuals obtained. In contrast, trajectory data includes all the information of the individual data along the time. Thus, in some literature, trajectory data and aggregate data are also called ``paired" and ``unpaired" data respectively \cite{yang2022generative}.

 For trajectory data, there exist many methods developed such as Hamiltonian neural networks \cite{greydanus2019hamiltonian}, Hidden Markov Model (HMM) \cite{alshamaa2019hidden}, Kalman Filter (KF) \cite{khalkhali2020vehicle}, Particle Filter (PF) \cite{santos2019unmanned} and related works \cite{ma2021learning, fang2019road}. Revealing determined hidden dynamics from data above can also be regarded as one special case of trajectory data when stochastic term vanishes. 

However, for aggregate data, few works are investigated because of the absence of individual trajectories. By leveraging the Fokker-Planck equation, the governing function of the probability density function of the Winner process variable, researchers investigate the revealing hidden stochastic dynamics from aggregate data. Zhou \textit{et al.} propose a novel method using the weak form of the Fokker Planck Equation (FPE), a partial differential equation, to describe the density evolution of data in a sampled form, which is then combined with the Wasserstein generative adversarial network (WGAN) in the training process \cite{ma2021learning}.
Chen \textit{et al.} proposed a method that leverages the physical-informed structure of Fokker-Planck equations and the approximation of the probability density function and reveals the hidden dynamics from sampling data at several time points \cite{chen2021solving}. Yang \textit{et al.} \cite{yang2022generative} integrated the distance measure such as Wasserstein distance used in WGAN \cite{arjovsky2017wasserstein} and the forward numerical solution of the parameterized stochastic differential equations as the Physics-Informed Deep Generative Models to reveal the hidden dynamics of the data.

In this work, by leveraging the weak form of the Fokker-Planck equation, we proposed a Weak Collocation Regression (WCR) method to reveal the hidden stochastic dynamics from nonequally-spaced temporal aggregate data. The Fokker-Planck equation describes the time evolution of the probability density function of the random variable in the Brownian motion. By the weak form of the Fokker-Planck equation, one can reduce the aggregate data at different time points to one dimensional temporal sequence where we have used the Monte-Carlo summation for the approximation of the weak form using data. Temporal derivatives are reduced by Linear Multistep Methods (LMMs), and then the linear system constructed by collocation of the kernels gives a precise approximation of the stochastic dynamics. The benefits are three folds.

\begin{enumerate}
    \item Remarkable performance. Our method has a low computational cost, comparable accuracy, controlled error, and the dimensional curse can be lessened. Numerical experiments show that the 1-dimensional problem can be easily revealed within $0.02$ second on the MacBook, while the computational time of the 3 or 4-dimensional problem can be limited to seconds. We can significantly reduce the error by changing the time interval and sample number. Directly numerically solving the inverse problem of stochastic differential equations may encounter the curse of dimensionality \cite{bellman2015adaptive}, where the tendency of numerical techniques requires a high computational cost growing exponentially with respect to the dimension of the variables. Benefits from the Monte-Carlo summation and the random collocations methods, the curse of dimensionality is lessened. Further, our method has natural potential for parallel computation.
    \item Robustness in the data with different qualities. Our method can handle a small amount of non-equally noise data without trajectory information and even partially data missing. No trajectories are needed; the only thing needed is the summation over all the points at each given time. Our method hence has natural permutation invariance with respect to the observations of different individuals at the same time snapshot. Our method can be applied to different number of individuals at different time snapshots. The obstacle of the measurement error can be reduced greatly since the summation of the aggregate data is resistant to white noise. Numerical experiments illustrate the stability of our methods. 
    \item Complex tasks. Our method is suited for the coupled drift term derived from Sombrero potential, variable-dependent diffusion term, and high-contrast problem and can be extended to a wider range. Numerical experiments show that it obtained consistent results if we expand each drift and diffusion term in a high-order polynomial. It shows that in dealing with the complex task above, our method has a wider representation and shows superior performance in revealing the hidden dynamics with a mild amount of data in a high accuracy.
\end{enumerate}

\section{Methodology}

In this paper, we consider the scenarios of the so-called aggregate data (unpaired data), where the trajectory information of each individual modeled by the evolution of state is unavailable. Only the collection of the $N_{t_j}$ samples $\mathbb{X}_{t_j}=\{\bx^i_{t_j}\}_{i=1}^{N_{t_j}}$ from the distribution of state $X_t$ at certain time point $t_j, j=1,2,\cdots, L$ is available. In this work, we call $\mathbb{X}_{t_j}$ one snapshot and hence there are $L$ snapshots in the data set with respect to $L$ time points.

For simplicity, we denote the set of the aggregate data as 
\begin{equation}
\mathbb{X} \overset{\Delta}{=} \{\mathbb{X}_{t_j}\}_{j=1}^{L}=\big\{ \{\bx^i_{t_j}\}_{i=1}^{N_{t_j}}\big\}_{j=1}^{L},
\end{equation}
 where $t_j$ is the time index of the $j$-th snapshot in the total $L$ snapshots, $\bf{x}_{t_j}^{i}$ is the $i$-th sample of the variable $X_t$ at time $t_j$.
To model the stochastic dynamics of the data, the general stochastic differential equations (SDEs) for Brownian motion are considered
\begin{equation}\label{eq.sde}
dX_t = \bf{\mu}(X_t,t)dt + \bf{\sigma}(X_t, t)dW_t,
\end{equation}
with drift term $\bf{\mu}(X_t,t)\in\mathbb{R}^d$ and diffusion term $\bf{\sigma}(X_t, t)\in\mathbb{R}^{d\times w}$ . Here, $X_t$ is the $d$-dimensional random variable of the data, and $W_t$ is the $w$-dimensional standard Brownian motion. 

The density function $p(x,t)$ of the above random variable $X_t$ can be described by the Fokker-Planck equation (FP), see \cite{risken1996fokker}, and we restate the result in Lemma \ref{lemma.fp}.
\begin{lemma}\label{lemma.fp}
Suppose $\{X_t\}$ solves the SDEs \eqref{eq.sde}, then the probability density function $p(x,t)$ of the random variable $X_t$ satisfies the following d-dimensional Fokker-Planck equation by the It\^{o} integral
\begin{equation}\label{eq.fp}
\partial_t p = -\nabla\cdot(\bf{\mu} p) + \sum_{i,j}^d \partial_{ij}(D_{ij}p),
\end{equation}
where $\bx\in\mathbb{R}^d$, $t\in[0,T]\subset\mathbb{R}$, $p = p(\bx,t)\in\mathbb{R}$ is the probability density function with $\int_{\mathbb{R}^d} p(\bx,t)dx=1$, $\boldsymbol{\mu} = [\mu_1(\bx,t), \mu_2(\bx,t), \cdots, \mu_d(\bx,t)]^T$, and the diffusion matrix $[D_{ij}]=[D_{ij}(\bx,t)]$ is given by 
\begin{equation}
D = \frac{1}{2}\bf{\sigma}\bf{\sigma}^T.
\end{equation}
\end{lemma}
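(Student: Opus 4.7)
The plan is to derive the Fokker--Planck equation in its weak form first and then appeal to the arbitrariness of the test function to obtain the strong form \eqref{eq.fp}. I would fix an arbitrary test function $\phi \in C_c^\infty(\Rbb^d)$ (compactly supported smooth) and track the evolution of $\Ebb[\phi(X_t)] = \int_{\Rbb^d} \phi(\bx)\, p(\bx,t)\, d\bx$.

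First, I would apply the multi-dimensional It\^o formula to $\phi(X_t)$ using the SDE \eqref{eq.sde}, which yields
\begin{equation*}
d\phi(X_t) = \sum_{i=1}^d \partial_i \phi(X_t)\, \mu_i(X_t,t)\, dt + \frac{1}{2}\sum_{i,j=1}^d \partial_{ij}\phi(X_t)\, (\bf{\sigma}\bf{\sigma}^T)_{ij}(X_t,t)\, dt + \sum_{i,k} \partial_i\phi(X_t)\, \sigma_{ik}(X_t,t)\, dW_t^k.
\end{equation*}
Taking expectations and using that the It\^o integral of a bounded integrand is a martingale with zero mean, the $dW_t$ contribution vanishes. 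Substituting $D = \tfrac{1}{2}\bf{\sigma}\bf{\sigma}^T$ and rewriting the expectations as integrals against the density $p$, I obtain
\begin{equation*}
\frac{d}{dt}\int_{\Rbb^d} \phi\, p\, d\bx = \int_{\Rbb^d}\Big( \nabla\phi \cdot \bf{\mu}\, p + \sum_{i,j} D_{ij}\, \partial_{ij}\phi\, p \Big) d\bx.
\end{equation*}

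Next, since $\phi$ has compact support, I can integrate by parts without boundary terms: one integration moves the first-order derivative off $\phi$ and produces $-\phi\, \nabla\cdot(\bf{\mu}p)$, while two integrations move the second-order derivative off $\phi$ and produce $\phi\, \partial_{ij}(D_{ij}p)$. Interchanging $\partial_t$ with the spatial integral (justified by standard regularity assumptions on $\bf{\mu}, \bf{\sigma}$, and $p$) gives
\begin{equation*}
\int_{\Rbb^d} \phi \Big[ \partial_t p + \nabla\cdot(\bf{\mu} p) - \sum_{i,j}\partial_{ij}(D_{ij}p)\Big]\, d\bx = 0
\end{equation*}
for every test function $\phi \in C_c^\infty(\Rbb^d)$. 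Invoking the fundamental lemma of the calculus of variations then yields \eqref{eq.fp} pointwise.

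The main technical obstacle is the justification of the vanishing of the stochastic integral and the validity of the interchange of differentiation and integration: both rely on integrability and growth conditions on $\bf{\mu},\bf{\sigma}$ and sufficient regularity/decay of $p$. These are typically imposed as standing hypotheses for the existence of a smooth transition density (e.g., Lipschitz drift/diffusion with linear growth, or uniform ellipticity of $D$), so I would simply cite the standard treatment in \cite{risken1996fokker} rather than re-derive it; the calculus outlined above is the core of the argument.
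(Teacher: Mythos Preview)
Your derivation via It\^o's formula, expectation, and integration by parts against a compactly supported test function is the standard textbook argument and is correct. The paper itself does not actually prove Lemma~\ref{lemma.fp}: it simply states the result and cites \cite{risken1996fokker}, so your proposal already goes further than the paper by sketching the underlying calculation; your closing remark that one would ultimately cite \cite{risken1996fokker} for the regularity hypotheses is exactly in line with how the paper handles it.
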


The Fokker-Planck equation bridges the gap between stochastic dynamics and the distributions of the data samples by sharing drift term and diffusion relation in SDE \eqref{eq.sde} and FP equation \eqref{eq.fp}. To reveal the hidden stochastic dynamics \eqref{eq.sde}, equivalently one can reveal the unknown terms in FP equation \eqref{eq.fp} by the samples which follow the corresponding probability distribution.

However, there is a huge gap between the density function and the data samples. Directly modeling the data by the Fokker-Planck equation usually needs the temporal and spatial derivatives of the density function $p(t, x)$ with respect to $t$ and $x$,  requiring a large amount of samples for constructing a smooth density function.  It becomes even impossible to construct the smooth density function when the dimension increases. To overcome this difficulty, in this work, we propose a framework using the weak form with collocation integral kernels instead of reconstructing density function to fast reveal hidden stochastic dynamics with a mild amount of data. We briefly introduce the methods here. 
We first write the weak form of the FP equation and then integrated by parts, the partial derivatives in the weak form are moved to the explicit kernel function, making the computation of the spatial derivatives much easier. Followed by the Monte-Carlo summation, the integration over space can be given by the summation over the samples. And the terms with temporal derivatives can be approximated by the linear multi-step method (LMMs). Finally, with a basis expansion of the unknown drift and diffusion, the linear system is built, and the sparse regression gives a good approximation of the unknown terms. Figure \ref{fig.scheme} depicts the picture of the methodology. The weak form, LMMs, and the regression model would be detailedly discussed in the next several subsections.

\subsection{Leverage the weak form of the Fokker-Planck equation}
In the realm of PDE theories, weak solutions from the weak form of PDEs are vital in analysis and applications. The weak form is introduced to solving PDEs with neural networks by the so-called WAN method \cite{zang2020weak} and then extended to revealing the unknown parameters in PDEs  \cite{bao2020numerical}. Borrowing the ideas from the Galerkin methods, we alter the weak form of the FP equations by using collocations of the kernel function as test functions. The space of the test functions thus are approximated by collocations of the kernels unlike the maximum optimization for the tunable parameters in WAN. Without maximum steps for the test function, the optimization only for the unknown terms in the revealing stochastic dynamics achieves stable performance with high robustness.

The test function is given as the kernel function $\psi(x):\mathbb{R}^d\to \mathbb{R}$. For example, a typical choice can be the Gaussian function with the form 
\begin{equation}\label{eq.gauss}
\phi(\bx, \bf{\rho}, \bf{\gamma}) = \Pi_{i=1}^d \frac{1}{\gamma_i \sqrt{2 \pi}} e^{-\frac{1}{2}\left(\frac{x_i - \rho_i}{\gamma_i}\right)^{2}},
\end{equation}
where $\bx=(x_1,\ldots, x_d)^T\in\mathbb{R}^d$,  $\bf{\rho}=(\rho_1,\ldots, \rho_d)^T$ and $\bf{\gamma}=(\gamma_1,\ldots, \gamma_d)^T$ are the expectation and  standard deviation of the Gaussian function.

Multiply $\psi(\bx)$ on both sides of equation \eqref{eq.fp}, integrate by parts, and one obtains
\begin{equation}\label{eq.integral}
\begin{split}
\frac{d}{dt}\int_{\mathbb{R}^d} p(\bx,t)\psi(\bx)d\bx
= \int_{\mathbb{R}^d} p(\bx,t)\bf{\mu}(\bx,t)\cdot \nabla \psi(\bx)d\bx +\int_{\mathbb{R}^d} p(\bx,t)\sum_{rs}^dD_{sr}\partial_{rs}\psi(\bx)d\bx,
\end{split}
\end{equation}
where $\partial_{rs} = \frac{\partial^2}{\partial x_r\partial x_s}$ and $r, s$ are indices of the dimension. 

As $p(\bx,t)$ is the density of the variable $X_t$ at time $t$ with $\int p(\bx,t)d\bx=1$, we can rewrite equation \eqref{eq.integral} as the expectation form
\begin{equation}
\frac{d}{dt}\mathbb{E}_{\bx\sim p(\bx, t)}[\psi(\bx)] = \mathbb{E}_{\bx\sim p(\bx, t)}[\bf{\mu}(\bx,t)\cdot\nabla\psi(\bx)]+\mathbb{E}_{\bx\sim p(\bx, t)}[\sum_{rs}^dD_{sr}\partial_{rs}\psi(\bx)],
\end{equation}
where $\mathbb{E}_{\bx\sim p(\bx,t)}$ is the expectation over the probability density function $p(\bx,t)$.  

\subsection{Approximate the weak form using data}
In the scenarios of real applications, the probability distribution $p(\bx,t)$ is hardly to obtain but the data of many samplings from the distribution is always feasible thanks to the modern technology and instruments. 
% Suppose that the empirical probability distribution $\hat{p}(\bx,t)$ of the data is valid and we approximate the expectation by the empirical probability distribtution.
By the law of large numbers, the expectation over the distribution can be approximated by the summation over the samplings of the variable related to the distribution, i.e.,
\begin{equation}
 \frac{1}{N_t}\sum_{i=1}^{N_t}\psi(\bx_t^i) \sim \mathbb{E}_{\bx\sim p(\bx, t)}[\psi(\bx)],
\end{equation}
where $N_t$ is the number of the samplings  of the variable $X_t$ at time $t$, and $\bx^i_t$ is the $i$-th sampling.

 For simplicity, we denote the data set over time as $\mathbb{X}_t\overset{\Delta}{=}\{\bx^i_{t}\}_{i=1}^{N_t}$.
Thus, with the data set of the samplings $\mathbb{X}_t$, the weak form of the FP equation \eqref{eq.fp} yields 
\begin{equation}\label{eq.monte}
\frac{d}{dt}\biggl(\frac{1}{N_t}\sum_{i=1}^{N_t}\psi(\bx^i_{t})\biggr)
 = \frac{1}{N_t}\sum_{i=1}^{N_t}\bf{\mu}(\bx_t^i,t)\cdot\nabla\psi(\bx^i_{t})+\frac{1}{N_t}\sum_{i=1}^{N_t}(\sum_{rs}^dD_{sr}\partial_{rs}\psi(\bx^i_{t})) + \epsilon,
\end{equation}
where we have used the approximations of the expectations by
the summations over the samplings, and the discussion of the error term $\epsilon$ can be found in Appendix \ref{app.error}.
The information of the drift and diffusion terms are hence related with the data set $\mathbb{X}_t$ in equation \eqref{eq.monte}. The dimension of the data has been reduced to one by the summation over the samplings with only the dependence on the time $t$ left. This form reveals that our method is naturally suited for the high dimensional problem.

For simplicity, we denote 
\begin{equation}\label{eq.f}
\frac{d}{dt} y(\mathbb{X}_t) 
 = f(\mathbb{X}_t, \bf{\mu}, D),
\end{equation}
where 
$$y(\mathbb{X}_t)=\frac{1}{N_t}\sum_{i=1}^{N_t}\psi(\bx^i_{t})$$ and $$f(\mathbb{X}_t, \bf{\mu}, D)=\frac{1}{N_t}\sum_{i=1}^{N_t}\bf{\mu}(\bx_t^i,t)\cdot\nabla\psi(\bx^i_{t})+\frac{1}{N_t}\sum_{i=1}^{N_t}(\sum_{rs}^dD_{sr}\partial_{rs}\psi(\bx^i_{t}))$$ are scalar functions varying over time $t$.
 
\subsection{Approximate the temporal derivatives}

% We show the method like Tikhonov differentiation to directly calculate the derivatives in the appendix, because its results have a certain gap with the method used in the text. 
The equation \eqref{eq.f} is indeed a 1-dimensional parameterized ordinary differential equation where the solutions on the discrete times were given with $y(\mathbb{X})$. The problem of finding unknown terms in FP equation now reduces to the inverse problem of the 1-dimensional ordinary differential equation \eqref{eq.f}. Many methods recently have been developed to reveal the dynamics such as SINDy \cite{rudy2017data}, PINN \cite{raissi2019physics} and ODENet \cite{hu2022revealing}. Considering the simplicity, in this work, we directly use the implicit form of Linear Multistep Methods (LMMs) to construct an explicit discrete form of equation \eqref{eq.f}.

Linear Multistep Methods (LMMs) have been developed as popular numerical schemes for the integration of the ordinary differential equations for the known dynamic systems \cite{goldstine2012history} with well-established mathematical theories \cite{atkinson2011numerical}. Raissi \textit{et al.} constructed multistep neural networks for data-driven discovery of nonlinear dynamical systems leveraging LMMs \cite{raissi2018multistep}.
Recently Du \textit{et al.}  applied LMMs in learning hidden dynamics from the data of given state with theoretical analysis of the stabilities and the convergence for the inverse problem \cite{keller2021discovery, du2021discovery}. In their work, equally-spaced version of LMMs is used. However, real data in the record usually contain a lot of missing points and even flaws, making equally-spaced version of LMMs hard to apply. In this work, with the help of variable step size Adams methods \cite{norsett1987solving}, we can deal with the non-equally spaced temporal data for the discovery of the hidden dynamics. 

For equally-spaced temporal data, Implicit Adams methods of trapezoidal rule, Milne method, 2-step backward differentiation method (BDF2) and Adams-Moulton methods are used in this work and some of them are listed below

\begin{equation} \label{eq.trapezoidal}
\text{Trapezoidal rule:} \quad   y_{n+1} - y_{n} = \frac{h}{2}(f_{n+1} + f_n),
\end{equation}
\begin{equation}\label{eq.am}
    \text{Milne method:} \quad y_{n+1}-y_{n-1} = \frac{h}{3}(f_{n+1} + 4f_{n} + f_{n-1}),
\end{equation}
where $y_n = y(t_n)$, $f_n = f(t_n, \theta)$ and $h\equiv h_n=t_{n+1}-t_n$ for $n=1,2,\cdots, L-1$.
%% what is the name of these two scheme?

For more general temporal data without equally spaced time, the 2-step BDF-formula of variable step size methods is as follows
\begin{equation}
y_{n+1}-\frac{\left(1+\omega_{n}\right)^{2}}{1+2 \omega_{n}} y_{n}+\frac{\omega_{n}^{2}}{1+2 \omega_{n}} y_{n-1}=h_{n} \frac{1+\omega_{n}}{1+2 \omega_{n}} f_{n+1},
\end{equation}
where $\omega_n = h_n/h_{n-1}$.

Recall that the Implict Adams methods of trapezoidal rule requires only two adjacent time points, thus the variable step-size version reads
\begin{equation} \label{eq.nonequalLMM2}
y_{n+1} - y_{n} = \frac{h_n}{2}(f_{n+1} + f_n).
\end{equation}

Our numerical experiments show that the trapezoidal rule performs well when only three time snapshots are avaliable. In the contrast, the LMMs methods requiring 2 steps fails in such scenario even if they have higher order of accuracy. Other types of the numerical schemes have been also tested but with a worse performance compared with the LMMs.

% Either by Tikhonov differentiation or the LMMs, 
By LMMs, we derive the discrete form of equation \eqref{eq.f} as the following form
\begin{equation}\label{eq.fd}
\bf{\hat{y}}(\mathbb{X}) = \bf{\hat{f}}(\mathbb{X},\bf{\mu}, D),
\end{equation}
where $\bf{\hat{y}}(\mathbb{X})$ is a vector constructed by $\bf{y}(\mathbb{X})=\left\{y(\mathbb{X}_{t_j})\right\}_{j=1}^L$, and  $\bf{\hat{f}}(\mathbb{X},\bf{\mu}, D)$ is a vector with the same size given by $\bf{f}(\mathbb{X},\bf{\mu}, D)=\left\{f(\mathbb{X}_{t_j},\bf{\mu}, D)\right\}_{j=1}^L$. Now the equation \eqref{eq.fd} gives us an algebraic equation, from which, by an apt ansatz form for the drift and diffusion terms, we can reveal $\bf{\mu}$ and $D$ with the data set $\mathbb{X}$.  
 % Although we list Tikhonov differentiation in appendix, it is much worse than LMMs in our experiments thus we omit the results leveraging the Tikhonov differentiation.

\subsection{Build the regression model}
In this subsection, we investigate the sparse regression and collocations of the kernels to solve $\bf{\mu}$ and $D$ from \eqref{eq.fd} with the data set $\mathbb{X}$. The dictionary representations are adopted for the unknown terms. One typical choice can be the polynomial basis for the approximation of the drift and diffusion terms with the coefficients of the basis to be determined. Let 
\begin{equation}
\Lambda = \{1, x_1, x_2, \cdots, x_d, x_1x_1, x_1x_2, \cdots, x_d^p\}^T
\end{equation}
denote the $p$-th order complete polynomials with respect to the variable $\bf{x}=(x_1,\cdots, x_d)^T$. The number of the terms of $\Lambda$ is $b \overset{\Delta}{=}|\Lambda| = \begin{pmatrix}
p+d\\
p
\end{pmatrix}$. In real applications, if we have some knowledge about the stochastic process, more flexible basis set $\Lambda$ can be chosen with diverse and fewer candidate basis.

Here, for simplicity, we suppose that $\bf{\mu}$ and $D$ are independent of $t$. And the components $\mu_i$ of $\bf{\mu}=[\mu_i]$ and $D_{ij}$ of $D=[D_{ij}]$ expand as 
\begin{equation}\label{eq.expan}
\mu_i = \sum_{j=1}^{b} \zeta_{ij}^\mu \Lambda_j, 
\quad\text{and } D_{ij} = \sum_{k=1}^{b} \zeta_{ijk}^D \Lambda_k, 
\end{equation}
where $\Lambda_j$ is the $j$-th component in the basis set $\Lambda$. 

Now, equation \eqref{eq.f} has the following matrix form
\begin{equation} \label{eq.B}
    \frac{d}{dt} \bf{y}(\mathbb{X}) = B(\mathbb{X}) \bf{\zeta}
\end{equation}
where
\begin{equation} \label{eq.zeta}
    \bf{\zeta} = \{ \underbrace{\zeta_{11}^\mu, \cdots, \zeta_{1b}^\mu, \cdots, \zeta_{d1}^\mu, \cdots, \zeta_{db}^\mu}_{\text{flatten of }\bf{\zeta}^\mu}, \underbrace{\zeta_{111}^D, \cdots, \zeta_{11b}^D, \cdots, \zeta_{dd1}^D, \cdots, \zeta_{ddb}^D}_{\text{flatten of }\bf{\zeta}^D}\}^T,
\end{equation}
and the flatten scheme of $\bf{\zeta}^\mu$ and $\bf{\zeta}^D$ is shown as 
\begin{equation*}
    \bf{\zeta}^\mu:
    \begin{pmatrix}
    \xymatrix@R=2ex@C=2ex{
    \zeta_{11}^\mu & \zeta_{12}^\mu & \cdots & \zeta_{1b}^\mu \\
    \zeta_{21}^\mu & \zeta_{22}^\mu & \cdots & \zeta_{2b}^\mu \\
    \cdots & \cdots & \cdots & \cdots \\
    \zeta_{d1}^\mu & \zeta_{d2}^\mu & \cdots & \zeta_{db}^\mu 
    \ar@{.>}"1,1";"1,4"
    \ar@{.>}"2,1";"2,4"
    \ar@{.>}"4,1";"4,4"
    \ar@{.>}"1,4";"2,1"
    \ar@{.>}"2,4";"3,1"
    \ar@{.>}"3,4";"4,1"
    } \\
    \end{pmatrix},
    \quad  \bf{\zeta}^D:
    \begin{pmatrix}
    \xymatrix@R=1.5ex@C=1.5ex{
    & & \zeta_{11b}^D & & & \zeta_{d1b}^D \\
    & \zeta_{112}^D \ar@{-}[ddd]|!{[d];[d]}\hole & & & \zeta_{d12}^D & \\
    \zeta_{111}^D & & & \zeta_{d11}^D & & \\
    & & \zeta_{1db}^D & & & \zeta_{ddb}^D \\
    & \zeta_{1d2}^D & & & \zeta_{dd2}^D \ar@{-}[lll]|!{[l];[l]}\hole & \\
    \zeta_{1d1}^D & & & \zeta_{dd1}^D & &
    \ar@{-}"3,1";"6,1" \ar@{-}"6,1";"6,4"
    \ar@{-}"3,1";"3,4" \ar@{-}"3,4";"6,4"
    \ar@{-}"2,2";"2,5" \ar@{-}"2,5";"5,5"
    \ar@{-}"1,3";"1,6" \ar@{-}"1,6";"4,6"
    \ar@{-}"1,3";"2,3" \ar@{-}"2,3";"3,3" \ar@{-}"3,3";"4,3"
    \ar@{-}"4,3";"4,4" \ar@{-}"4,4";"4,5" \ar@{-}"4,5";"4,6"
    \ar@{.>}"3,1";"1,3" \ar@{.>}"6,1";"4,3"
    \ar@{.>}"3,4";"1,6" \ar@{.>}"6,4";"4,6"
    \ar@{.>}"1,3";"4,1" \ar@{.>}"3,3";"6,1"
    \ar@{.>}"1,6";"4,4" \ar@{.>}"3,6";"6,4"
    \ar@{.>}"4,1";"2,3" \ar@{.>}"4,4";"2,6"
    }
    \end{pmatrix}
\end{equation*}
and $B(\mathbb{X}):=\begin{pmatrix}
    \bf{b}_1^T(\mathbb{X})\\
    \vdots\\\bf{b}_{L}^T(\mathbb{X})\end{pmatrix}$ is a known coefficient matrix with size $L\times(db+d^2b)$, $\bf{y}(\mathbb{X}):=\begin{pmatrix}
    y_1(\mathbb{X})\\ \vdots\\y_{L}(\mathbb{X})\end{pmatrix}$ is a colomn vector with size $L$.

By applying linear multistep methods (LMMs) on the temporal derivative in \eqref{eq.B}, one can obtain a linear system about the coefficient vector $\bf{\zeta}$ as
\begin{equation}\label{eq.linear}
A(\mathbb{X}) \bf{\zeta} = \bf{\hat{y}}(\mathbb{X}),
\end{equation}
where $A(\mathbb{X})$ and $\bf{\hat{y}}(\mathbb{X})$ are constructed by $B(\mathbb{X})$ and $\bf{y}(\mathbb{X})$ in \eqref{eq.B} because of the linearity of the system, and the coefficient vector $\bf{\zeta}$ are collected and vectorized from all the coefficients $\left\{\left\{\zeta_{ij}^\mu\right\}_{j=1}^{b}\right\}_{i=1}^{d}$ and $\left\{\left\{\zeta_{ijk}^D\right\}_{k=1}^{b}\right\}_{i,j=1}^{d}$ in the expansions of $\bf{\mu}$ and $D$ shown in \eqref{eq.zeta}. Now, equation \eqref{eq.linear} gives the relation between the data set $\mathbb{X}$ and the unknown parameters in the polynomial expansions of the hidden dynamics. 

\begin{example}
To make it clear, we give an example to illustrate how the linear system  is obtained by applying LMMs such as the trapezoidal rule \eqref{eq.trapezoidal} on the equation \eqref{eq.B} with temporal derivatives. 

In equation \eqref{eq.B}, we set $ n=db+d^2b $ and
\begin{equation}
    B(\mathbb{X}) = \begin{pmatrix}
        b_{11} & b_{12} & \cdots & b_{1n} \\ 
        b_{21} & b_{22} & \cdots & b_{2n} \\ 
        \vdots & \vdots & \ddots & \vdots \\
        b_{L1} & b_{L2} & \cdots & b_{Ln} \\ 
    \end{pmatrix}, \quad
    \bf{y}(\mathbb{X}) = \begin{pmatrix}
        y_1(\mathbb{X}) \\ y_2(\mathbb{X}) \\ \vdots \\ y_L(\mathbb{X}) \\
    \end{pmatrix}
\end{equation}
By the trapezoidal rule, the matrix $A(\mathbb{X})\in\mathbb{R}^{(L-1)\times(db+d^2b)}$ and vector $\bf{\hat{y}}(\mathbb{X})\in\mathbb{R}^{L-1}$ in the linear system can be assembled by 
\begin{small}
\begin{equation}
    A(\mathbb{X}) = \frac{h}{2}\begin{pmatrix}
        b_{11}+b_{21} & b_{12}+b_{22} & \cdots & b_{1n}+b_{2n} \\ 
        b_{21}+b_{31} & b_{22}+b_{32} & \cdots & b_{2n}+b_{3n} \\ 
        \vdots & \vdots & \ddots & \vdots \\
        b_{L-1,1}+b_{L1} & b_{L-1,2}+b_{L2} & \cdots & b_{L-1,n}+b_{Ln} \\ 
    \end{pmatrix}, \quad
    \bf{\hat{y}}(\mathbb{X}) = \begin{pmatrix}
        y_2(\mathbb{X})-y_1(\mathbb{X}) \\ y_3(\mathbb{X})-y_2(\mathbb{X}) \\ \vdots \\ y_L(\mathbb{X})-y_{L-1}(\mathbb{X}) \\
    \end{pmatrix}.
\end{equation}
\end{small}

Here, the rows of the matrix $A(\mathbb{X})$ or the elements of the vector $\bf{\hat{y}}(\mathbb{X})$ are constructed by the rows of $B(\mathbb{X})$ or the elements of $\bf{y}(\mathbb{X})$ where the number of rows is reduced by one because of the trapezoidal rule \eqref{eq.trapezoidal}.
\end{example}

% Hence, the matrix $A(\mathbb{X}):= \begin{pmatrix}
%   \bf{a}_1(\mathbb{X})\\ \vdots\\\bf{a}_{T-1}(\mathbb{X})\end{pmatrix}$ with size $(T-1)\times(db+d^2b)$ and vector $\bf{\hat{y}}(\mathbb{X}):=\begin{pmatrix}
%     \hat{y}^1(\mathbb{X})\\ \vdots\\\hat{y}^{T-1}(\mathbb{X})\end{pmatrix}$ with size $T-1$ by rows.

A sparse regression applied to the linear system then reveals the hidden dynamics from the data. But before that, we would discuss the collocation strategies to improve the robustness and accuracy. Recall that in this work, the test function $\psi$ is considered as the Gaussian function  $\psi=\phi(\bx, \bf{\rho}, \bf{\gamma}) \overset{\Delta}{=} \Pi_{i=1}^d \frac{1}{\gamma_i \sqrt{2 \pi}} e^{-\frac{1}{2}\left(\frac{x_i - \rho_i}{\gamma_i}\right)^{2}}$ given by \eqref{eq.gauss},  one can easily obtain the 
the specific form of the linear system \eqref{eq.linear} as
\begin{equation}\label{eq.gl}
A(\mathbb{X}, \bf{\rho}, \bf{\gamma}) \bf{\zeta} = \bf{\hat{y}}(\mathbb{X}, \bf{\rho}, \bf{\gamma})
\end{equation}
 by the replacement of $\psi(\bf{x})=\phi(\bf{x}, \bf{\rho}, \bf{\gamma})$.

The collections of the test functions is taken as $\mathbb{C}^d_{\bf{\rho}, \bf{\gamma}}=\left\{\phi(\cdot, \bf{\rho}_m, \bf{\gamma}_m) \right\}_{m=1}^M$ where $\bf{\rho}_m\in \mathbb{R}^d$ and $\pmb{\gamma}_m \in \mathbb{R}^d$. Here we have used the notations $\bf{\rho}_m$ and $\bf{\gamma}_m$ to denote the expectation and standard deviation of the $m$-th Gaussian function.
Taking the $m$-th Gaussian function $\phi(\cdot, \bf{\rho}_m, \bf{\gamma}_m)$ as the test function, 
the linear system with respect to $\bf{\zeta}$ yields
\begin{equation}\label{eq.ls}
 A_m\bf{\zeta} =\bf{\hat{y}}_m,
\end{equation}
where $A_m = A(\mathbb{X}, \bf{\rho}_m, \bf{\gamma}_m)$ and $\bf{\hat{y}}_m  =  \bf{\hat{y}}(\mathbb{X}, \bf{\rho}_m, \bf{\gamma}_m)$. The values of the unknown coefficients $\bf{\zeta}$ can be revealed solving this linear system.  However, integration with one test function $\phi(\bx, \bf{\rho}_i, \bf{\gamma}_i)$ only reveals part of the information from the given data. Hence, we build the linear systems over the whole test function collection $\mathbb{C}^d_{\bf{\rho}, \bf{\gamma}}$. Namely, we assemble and solve the stacked linear equation 
\begin{equation}\label{eq.large}
\tilde{A} \bf{\zeta} = \bf{\tilde{y}},
\end{equation}
where 
\begin{equation}
% \tilde{A}=\tilde{A}(\mathbb{X}, \mathbb{C}^d_{\bf{\rho}, \bf{\gamma}}) 
\tilde{A} = \begin{pmatrix}
 A_1\\ A_2\\ \vdots\\ A_M   
\end{pmatrix},\; \text{and}\;
% \bf{\tilde{y}}=
% \bf{\tilde{y}}(\mathbb{X}, \mathbb{C}^d_{\bf{\rho}, \bf{\gamma}})
\bf{\tilde{y}} = 
\begin{pmatrix}
 \bf{\hat{y}}_1\\
 \bf{\hat{y}}_2\\
 \vdots\\
 \bf{\hat{y}}_M
\end{pmatrix}
\end{equation}
% \begin{equation}
% \tilde{A}=\tilde{A}(\mathbb{X}, \mathbb{C}^d_{\bf{\rho}, \bf{\gamma}}) = (A_1, A_2, \ldots, A_M)^T, 
% \end{equation}
% and 
% \begin{equation}
% \bf{\tilde{y}}=\bf{\tilde{y}}(\mathbb{X}, \mathbb{C}^d_{\bf{\rho}, \bf{\gamma}}) = 
% (\bf{\hat{y}}_1, \bf{\hat{y}}_2, \ldots, \bf{\hat{y}}_M)^T
% \end{equation}
are constructed by the matrix $\{A_m\}_{m=1}^M$ and the vector $\{\hat{\bf{y}}_m\}_{m=1}^M$ over all of the test functions in the collection $\mathbb{C}^d_{\bf{\rho}, \bf{\gamma}}$.

To better approximate the complete test functional space, the collection $ \mathbb{C}^d_{\bf{\rho}, \bf{\gamma}}$ needs to be large enough, which means a huge amount of computational cost. To overcome this difficulty, we borrow the ideas of the functional basis from the Galerkin methods and the collocation points from the collocation methods. It is key to generate the collection of the test functions $\mathbb{C}^d_{\bf{\rho}, \bf{\gamma}}$ taking into account the efficiency and the accuracy. For simplicity, we randomly generate $M$ parameters of $\bf{\rho}_m$ in the range of the values of the data $\mathbb{X}$. Thus the test function collection $\mathbb{C}^d_{\bf{\rho}, \bf{\gamma}}$ is obtained and a large linear system \eqref{eq.large} is then built.

Followed by the linear regression and sparse identification algorithm (STRidge) \cite{kutz16sparse},  the explicit form of the solved dynamics fits the data, see the STRidge Algorithm \ref{alg:STRidge} in appendix for details. Specifically, to give the sparse results, the linear regression is applied but with the hard threshold $\eta$ taken, i.e., smaller components of $\bf{\zeta}$ less than $\eta$ would be set zero and then without those rows and columns, linear regression continues until converges.  Distributing the components of $\bf{\zeta}$ to the drift and diffusion terms gives the explicit form of the hidden stochastic dynamics.

The result of the WCR method error analysis is summarized in Theorem \ref{thm:w}.

\begin{theorem}\label{thm:w}
    Let $\hat{\boldsymbol{\zeta}}=\Abf^\dagger\bbf$ be the learned model coefficients and $\boldsymbol\zeta^\star$ the true model coefficients, $\alpha$ is the order of linear multi-step method. For $C$ independent of sample number $N$ and time interval $\Delta t$, the following holds
    \[\Ebb\left[\nrm{\hat{\boldsymbol{\zeta}}-\boldsymbol\zeta^\star}_\infty\right] \leq C(\frac{1}{\sqrt{N}\Delta t}+\Delta t^\alpha).\]
\end{theorem}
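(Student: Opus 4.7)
The strategy is to split the total error into a Monte-Carlo sampling part and a linear-multi-step truncation part, then extract the natural $\Delta t$ scaling of $\Abf$ so as to isolate where the $1/\Delta t$ amplification comes from.

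First, I would introduce an ``ideal'' pair $(\bar{\Abf},\bar{\bbf})$ obtained from $(\Abf,\bbf)$ by replacing every empirical average $\frac{1}{N_t}\sum_{i}\psi(\bx^i_t)$, $\frac{1}{N_t}\sum_{i}\mu(\bx^i_t)\cdot\nabla\psi(\bx^i_t)$, and $\frac{1}{N_t}\sum_{i}D_{sr}\partial_{rs}\psi(\bx^i_t)$ by its expectation $\Ebb_{\bx\sim p(\cdot,t)}[\cdot]$. Because $\boldsymbol{\zeta}^\star$ yields the true drift/diffusion, the weak-form identity \eqref{eq.integral} combined with classical LMM truncation theory gives $\bar{\bbf}-\bar{\Abf}\boldsymbol{\zeta}^\star = \bar{\tau}$ with $\|\bar{\tau}\|_\infty=O(\Delta t^{\alpha+1})$ (the extra factor of $\Delta t$ beyond $\Delta t^{\alpha}$ appearing because $\bar{\Abf}$ is itself of order $\Delta t$). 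Writing $\Abf=\bar{\Abf}+\Delta\Abf$ and $\bbf=\bar{\bbf}+\Delta\bbf$, one obtains the key identity
\[
\hat{\boldsymbol{\zeta}}-\boldsymbol{\zeta}^\star \;=\; \Abf^\dagger\bigl(\bbf-\Abf\boldsymbol{\zeta}^\star\bigr)\;=\;\Abf^\dagger\bigl(\Delta\bbf-\Delta\Abf\,\boldsymbol{\zeta}^\star-\bar{\tau}\bigr),
\]
valid as soon as $\Abf$ has full column rank on the active support.

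Second, I would exploit the $\Delta t$-scaling of $\bar{\Abf}$. By inspection (cf.\ the trapezoidal example in the text), every entry of $\bar{\Abf}$ is of the form $\Delta t\cdot \tilde{a}_{jk}$ with $\tilde{a}_{jk}$ bounded independently of $\Delta t$ (they are weighted sums of true expectations of dictionary functions). Assuming the natural identifiability/persistency-of-excitation hypothesis $\sigma_{\min}(\tilde{\Abf})\ge c>0$ uniformly in $\Delta t$, one has $\|\bar{\Abf}^\dagger\|=O(1/\Delta t)$, and the perturbation $\Delta\Abf$ (whose entries are of order $\Delta t/\sqrt N$) is negligible relative to $\bar\Abf$ for $N\gtrsim 1/c^2$, so $\|\Abf^\dagger\|=O(1/\Delta t)$ with probability tending to one. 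This is the step in which the $1/\Delta t$ factor in the bound enters.

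Third, I would bound each piece. For the sampling term, $\Delta\bbf_j$ equals $(y_{j+1}-\bar y_{j+1})-(y_j-\bar y_j)$, a difference of two independent empirical means of the bounded test function $\psi$, each of variance $O(1/N)$; hence $\Ebb\|\Delta\bbf\|_\infty = O(1/\sqrt{N})$ (a $\sqrt{\log L}$ factor from the maximum is absorbed in $C$). Similarly $\Ebb\|\Delta\Abf\|_\infty = O(\Delta t/\sqrt N)$ since each entry carries a $\Delta t$ prefactor. The deterministic truncation satisfies $\|\bar\tau\|_\infty=O(\Delta t^{\alpha+1})$. Combining,
\[
\Ebb\bigl[\|\hat{\boldsymbol{\zeta}}-\boldsymbol{\zeta}^\star\|_\infty\bigr]\;\le\;\|\Abf^\dagger\|\bigl(\Ebb\|\Delta\bbf\|_\infty+\|\boldsymbol{\zeta}^\star\|_1\,\Ebb\|\Delta\Abf\|_\infty+\|\bar\tau\|_\infty\bigr)\;=\;O\!\left(\tfrac{1}{\sqrt N\,\Delta t}+\tfrac{1}{\sqrt N}+\Delta t^\alpha\right),
\]
and the middle term is dominated by the first, giving the claimed rate.

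The main obstacle is the rigorous control of the pseudo-inverse under a random matrix perturbation: one must verify the identifiability lower bound $\sigma_{\min}(\tilde{\Abf})\ge c$ for the dictionary/test-function pair used in the method, and then ensure that the random perturbation $\Delta\Abf$ does not destroy this bound with high probability. Once this is in place, the remaining ingredients -- LMM local-truncation analysis and Monte-Carlo variance bounds for bounded test functionals -- are classical and only contribute multiplicative constants to $C$.
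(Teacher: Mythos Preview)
Your proposal is correct and follows essentially the same route as the paper's proof: introduce an ideal system $(\bar{\Abf},\bar{\bbf})$, split the residual $\bbf-\Abf\boldsymbol{\zeta}^\star$ into a Monte-Carlo piece and an LMM-truncation piece, and absorb everything through a uniform bound on the pseudo-inverse. The only cosmetic difference is a choice of normalization: you keep the $\Delta t$ factor inside $\Abf$ (as in the trapezoidal example of the main text), so that $\|\Abf^\dagger\|=O(1/\Delta t)$ and the residual pieces are $O(1/\sqrt N)$ and $O(\Delta t^{\alpha+1})$; the paper instead divides the system through by $\Delta t$, so its $\overline{\Abf}$ is $O(1)$, its Assumption~1(4) posits $\|\overline{\Abf}^\dagger\|_\infty\le C_2$ directly, and the $1/\Delta t$ appears inside the residual via the discrete operator $\mathcal{D}_t^L$. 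The two conventions yield identical final bounds. Your identifiability hypothesis $\sigma_{\min}(\tilde{\Abf})\ge c$ is exactly what the paper packages as Assumption~1(3)--(4), and your random-perturbation control of $\Abf^\dagger$ is the same one-line argument the paper uses to pass from $\|\overline{\Abf}^\dagger\|_\infty$ to $\|\Abf^\dagger\|_\infty$.
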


The process of the error analysis is based mainly on the approach in \cite{messenger2022learning}. Following this method, the error of WCR is divided into two parts, each of which is separately estimated. And the complete proof is included in Appendix \ref{app.error}.

Furthermore, for fixed number of basis $b$, the computational complexity of WCR is $\mathcal{O}(LNMd)$, with the number of snapshots $L$, samples $ N $, test functions $ M $ and dimension $ d $. See Appendix \ref{app.complexity} for the detail.
 
The pseudo code of the algorithm is exhibited in Algorithm \ref{alg.WCR}. Figure \ref{fig.scheme} shows the whole procedure of the methodology.

\begin{remark}
Another choice of the ansatz for the drift and diffusion terms can be the neural network leveraging its so-called universal approximation properties. But it would require nonlinear optimization, which enhance the complexity of the algorithm, and left for further investigations beyond the scope of this work. We focus on introducing the framework of the weak form to reveal the hidden dynamics in this work.
\end{remark}
\begin{remark}
    We drew inspiration from the Kernel Density Estimation method and convolutional neural networks when selecting the Gaussian function as our test function. The Gaussian function offers a simple form and can be easily adjusted using only two parameters: the mean and standard deviation. This flexibility allows it to adapt to different data distributions effectively. Moreover, the Gaussian function demonstrates excellent smoothness and performs well in probability density estimation tasks. Its concentrated values and rapid decay make it suitable for the proofs presented in the appendix of our work. However, we would like to emphasize that our method is not limited to Gaussian functions alone and can be applied with other types of test functions as well.
\end{remark}

\begin{figure}[htp]
    \includegraphics[width=\textwidth]{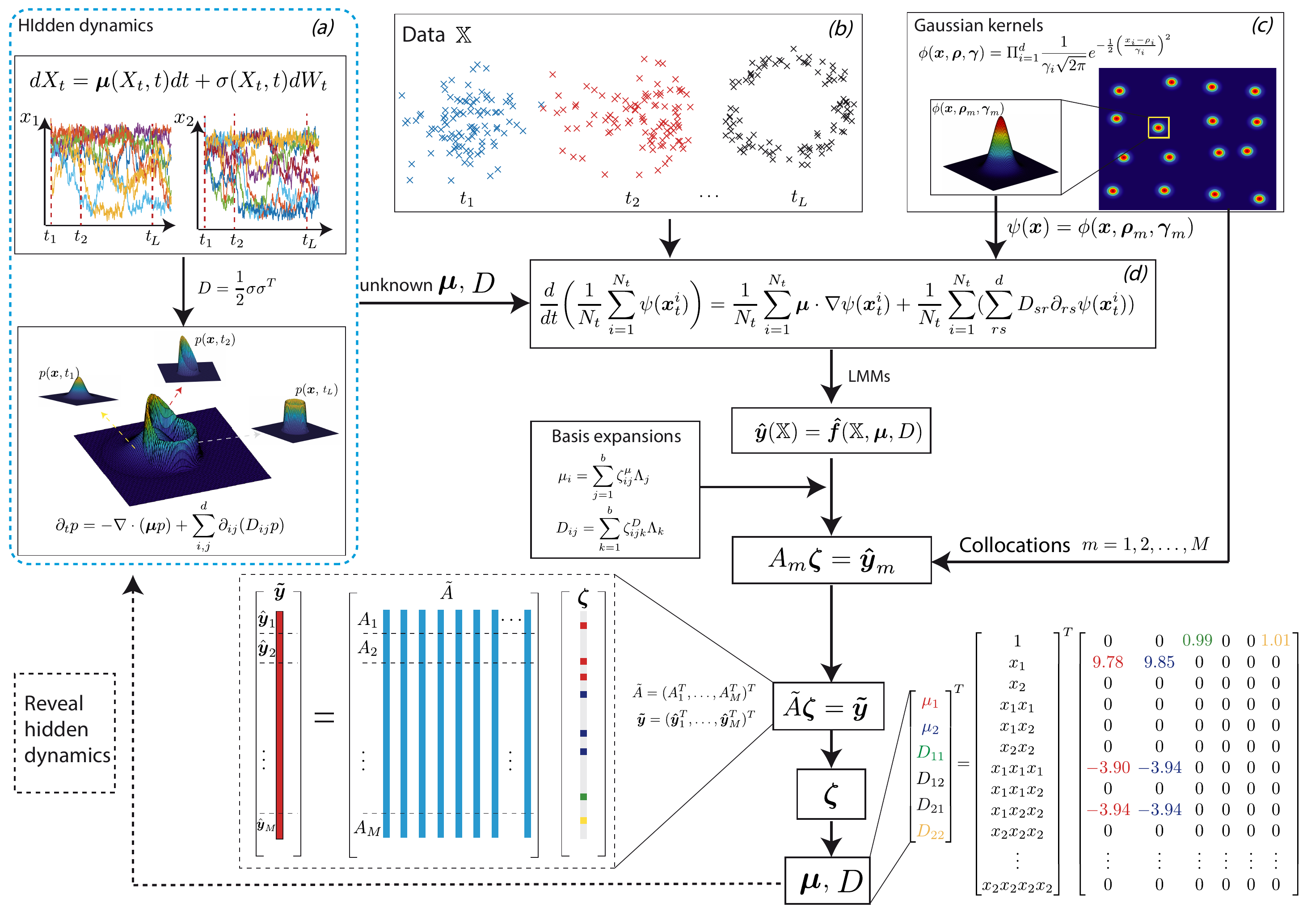}
    \caption{The diagram of the weak collocation regression method. The aggregate data set $\mathbb{X}$ on panel (b) is the collection of $L$ snapshots of samples at time $t_1, t_2, \ldots, t_L$ from one unknown stochastic process. We model this process by the stochastic differential equations in panel (a) with unknown drift $\bf{\mu}(X_t, t)$ and diffusion $\bf{\sigma}(X_t,t)$ terms. By sampling Gaussian kernels in panel (c), for each kernel, the weak form in panel (d) gives the algebraic relation of the unknown terms and the data set. By the LMMs and the  basis expansion of the unknown terms, a linear system is built and further combined together to form a large system over all of the collocation kernels. Finally, the sparse linear regression gives the sparse regression of the drift and diffusion terms and hence the hidden dynamics is revealed.}
    \label{fig.scheme}
\end{figure}

\begin{algorithm}[H] \caption{Weak Collocation Regression method (WCR).} \label{alg.WCR}
\SetAlgoLined
\KwResult{The explicit form of the governing stochastic equation of the data.}
\KwIn{Aggregate data set $\mathbb{X}$.}
Sample $M$ expectations $\{\bf{\rho}_m\}_{m=1}^M$ in the region which contains all of the samples in the data set $\mathbb{X}$;\\
Set the expectations $\{\bf{\gamma}_m\}_{m=1}^M$ as hyper-parameters with default 1 in each dimension;\\
Generate the Gaussian function collections $\mathbb{C}^d_{\bf{\rho}, \bf{\gamma}}=\left\{\phi(\cdot, \bf{\rho}_m, \bf{\gamma}_m) \right\}_{m=1}^M$ using above expectations and deviations;\\

% Generate the test function collections $\mathbb{C}^d_{\bf{\rho}, \bf{\gamma}}=\left\{\phi(\cdot, \bf{\rho}_m, \bf{\gamma}_m) \right\}_{m=1}^M$ by sampling $M$ expectations $\{\bf{\rho}_m\}_{m=1}^M$ in the region $\Omega$ which contains all of the samples in the data set $\mathbb{X}$ and the expectations $\{\bf{\gamma}_m\}_{m=1}^M$ as hyper-parameters with default 1 in each dimension;\\

\For{$m <= M$}{
Compute and assemble the vector $\bf{y}(\mathbb{X})=
\begin{pmatrix}
\vdots\\
\frac{1}{N_{t_l}}\sum_{i=1}^{N_{t_l}}\phi(\bx^i_{t_l},\bf{\rho}_m, \bf{\gamma}_m)\\
\vdots\\
\end{pmatrix}
$ by the left term of the weak form \eqref{eq.monte} where the test function is $\psi = \phi(\bx^i_{t_l},\bf{\rho}_m, \bf{\gamma}_m)$ and $l$ is the index of the $l$-th time snapshots;\\

Compute and assemble the matrix $B(\mathbb{X})$ in \eqref{eq.B} by the right term of the weak form \eqref{eq.monte}
over all of the time snapshots with the help of the basis expansions of each entry of drift vector $\bf{\mu}=[\mu_i]$ and diffusion matrix $D=[D_{ij}]$ 
\begin{equation*}
\mu_i = \sum_{j=1}^{b} \zeta_{ij}^\mu \Lambda_j, 
\quad\text{and } D_{ij} = \sum_{k=1}^{b} \zeta_{ijk}^D \Lambda_k
;
\end{equation*}
\\

Assemble the vector $\bf{\zeta}$ by $\bf{\zeta}^\mu = [\zeta_{ij}^\mu]$ and $\bf{\zeta}^D = [\zeta_{ijk}^D]$ using the flatten scheme in \eqref{eq.zeta};\\

Compute the matrix $A_m$ and $\hat{\bf{y}}_m$ by the matrix $B(\mathbb{X})$ and $\bf{y}(\mathbb{X})$ using the LMMs, e.g., the trapezoidal rule \eqref{eq.trapezoidal};\\
}
Stack the linear system $\tilde{A}\bf{\zeta} = \bf{\tilde{y}}$
with $\tilde{A}=(A_1^T,\ldots, A_M^T)^T$ and $ \bf{\tilde{y}} = (\bf{\hat{y}}_1^T, \ldots, \bf{\hat{y}}_M^T)^T$;\\
Compute $\bf{\zeta}$ by a sparse linear regression of $\tilde{A}\bf{\zeta} = \bf{\tilde{y}}$;\\
Reconstruct the drift $\bf{\mu}$ and diffusion $D$ by distributing $\bf{\zeta}$. 
\end{algorithm}

%%%%%%%%%%%%%%%%%%%%%%%%%%%%%%%%%%%%%%%%%%%%%%%%%%%%%%%
\section{Numerical experiments}
%If not specifically mentioned, the time step for gene set $dt=0.001$ in this paper, and $N(0,1)+1$ as initial values. 
\textbf{Data acquisition. } All the raw data used in this work for the experiments of revealing the hidden dynamics are obtained by integrating the given SDEs
\begin{equation*}
    d {X}_{t}=\boldsymbol{\mu}_{t} d t+ \bf{\sigma}_{t} d W_{t}, \quad t \geq 0
\end{equation*}
from $t=0$ to $t=T$ with Euler–Maruyama scheme
\begin{equation}\label{eq.maruyama}
\tilde{{X}}_{(i+1) \delta t} =\tilde{{X}}_{i \delta t}+\boldsymbol{\mu}_{t} \delta t+\boldsymbol{\sigma}_{t} \sqrt{\delta t} \bf{\mathcal{N}}_{i}.
\end{equation}
Here $\delta t$ is the time step of the numerical scheme, $\bf{\mathcal{N}}_{i}$ are i.i.d standard Gaussian random variables and the intial values are sampled from a given distribution such as a Gaussian function.  The same random seed is adopted among different experiments. The experimental data is then sampled from these trajectories, i.e., $N_i$ points are sampled at each time snapshot to remove the trajectory information, and only $L$ time snapshots are chosen as the experimental data, denoted as $\mathbb{X}=\{\mathbb{X}_i\}_{i=1}^L=\{\{\bf{x}_i^j\}_{j=1}^{N_i}\}_{i=1}^L$. The time snapshots can be non-equally spaced where the time interval $\Delta t_i=t_{i+1} - t_i$ varies and if the time interval is equal we denote the interval as $\Delta t$. The random noise is added to the raw data with the noise level $\delta$ as $\hat{\bf{x}}_i^j = \bf{x}_{i}^j + \delta\mathcal{U}_i^j\bf{x}_{i}^j$ where $\mathcal{U}_i^j$ is a random variable. 
\\
\textbf{Experimental setups. }
In this work, we use the collocations of the Gaussian functions as the test functions for the weak form. We sample these Gaussian kernels by randomly sampling the expectations $\bf{\rho}_m$, $m=1,2,\ldots, M$ using Latin Hypercub Sampling (LHS) method \cite{stein1987large}
in the region of the data, i.e., the hypercube containing all of the data. And the standard deviation $\bf{\gamma}=\gamma I_d$, where $I_d$ is the identity matrix, is chosen as a hyper parameter with default $\gamma=1$ in each sampled Gaussian function. In all cases, our experiments show that all of the non-zero coefficients are correctly identified and all of the zero terms are eliminated
by the sparse regression. Hence in this work,
we define the Maximum Relative Error (MRE) of non-zero terms
\begin{equation*}
    \text{MRE} = \max_{\theta_i\neq 0}\frac{|\hat{\theta}_i-\theta_i|}{|\theta_i|} 
\end{equation*}
as the criterion for the evaluation of the experimental results, where $\theta_i$ represents the $i$-th parameter of the drift and diffusion terms and $\hat{\theta}_i$ is the learned parameter from data.
All the experiment were done on the  MacBook Pro 2021 with an M1 chip. We summarize the notations used throughout the experiments in Table \ref{tab.symbol}.
\begin{table}[htp]
    \centering
    \begin{tabular}{cccc}
    \toprule
    Variable & definition & 
    Variable & definition \\
    \midrule
    $L$ & Number of time snapshots& $\Delta t$ & Time interval of snapshots\\
    $\gamma$ & Gaussian variance & $\bf{\rho}_m$ & Expectation of the $m$-th Gaussian kernel
    \\
    $M$ & Gaussian sample number & $N_i$  & Sample number at $i$-th time snapshot \\
    MRE & Maximum Relative Error  & $\delta$ & Noise level$^*$  \\
    \bottomrule
    \end{tabular}
    \caption{Notations used throughout the experiments. $^*$We add multiplicative noise by $x=x+\delta\mathcal{U}x$, where $\mathcal{U}$ is an uniform random variable in [-1,1]. See section \ref{sec.performance} for details.}
    \label{tab.symbol}
\end{table}

%%%%%%%%%%%%%%%%%%%%%%%%%%%%%%%%%%%%%%%%%%%%%%%%%%%%%%%
\subsection{Typical 1-dimensional problem} \label{sec.1d}

One-dimensional stochastic problem widely exists in the scientific and engineering fields, such as the population growth, asset price and investments \cite{oksendal2013stochastic}. To better illustrate the abilities of our WCR method for dealing with complex tasks, in this section, we focus on revealing the hidden dynamics from 1d aggregate data avoiding the difficulties brought by the dimension of the data. We consider the following 1d model
\begin{equation}
dX_t = \mu(X_t)dt + \sigma(X_t)dW_t,
\end{equation}
where drift $\mu(X_t)$ and diffusion  $\sigma(X_t)$ terms reduce to scalar functions. Here we mainly focus on the five cases: (I) Cubic polynomial problem with only three snapshots; (II) Variable-dependent diffusion problem; (III) Quintic polynomial problem with high contrast; (IV) General form of the basis dictionary; (V) General drift term out of basis.
And these cases give a direct illustration of the good performance of WCR on the complex tasks. 

\textbf{(I) Cubic polynomial problem with three snapshots.} In this case, the raw data of the experiment are generated by the 1d cubic polynomial form
\begin{equation} \label{eq.x-x3}
    dX_t = (X_t-X_t^3)dt + dW_t
\end{equation}
in from $t=0$ to $t=1$ with initial values at $t=0$ sampled from a Gaussian distribution $ \mathcal{N}(0,0.1)$
using Euler-Maruyama scheme \eqref{eq.maruyama}. We obtained $10, 000$ samples at each snapshot, and only three snapshots at (a) $t=0.1, 0.3, 0.5$ and (b) $t=0.2, 0.5, 1$ are adopted as the experimental data. In this case, we expand the drift term as the third-order basis expansion form 
$$ \mu(x) = \lambda_0 + \lambda_1x + \lambda_2x^2 + \lambda_3x^3, $$
and the diffusion term is treated as a tunable parameter $ D(x) = \frac{1}{2}\sigma^2 = D_0$. 
We have used the same experimental setup for the same problem of Chen's work \cite{chen2021solving}.

 Only three snapshots are available in the data set (a) and (b). Further the time intervals of the snapshots in (b) are not equal. Thus the approximation of the temporal derivatives requires a reliable scheme to overcome the difficulties. Here we use the variable step-size version of Implicit Adams methods of trapezoidal rule \eqref{eq.nonequalLMM2}, which can be applied on small amount of time snapshots and non-equally spaced time series data. For the collocation kernels, 20 Gaussian kernels are sampled by sampling the expectations of the Gaussian function using LHS method with the standard variance set as $\gamma=0.85$.
 
 The results are shown in Figure \ref{fig.1d3.PKduan} and Table \ref{tab.1d3.PKduan}. We have compared the results with Chen's results \cite{chen2021solving} as the state of the art (sota) to show that for the same problem, WCR method can achieve a comparable accuracy but with a much less computational cost. All of the non-zero terms are correctly identified and all of the zero terms are eliminated by the sparse regression. The MRE of (a) and (b) are less than $4.6\%$ and $0.42\%$ respectively. Most notably, the experiments of applying WCR on the data set (a) and (b) are all completed within 0.02s on a MacBook Pro 2021 with an M1 chip. 

For a mild amount of the data, WCR method still achieves remarkable performances. 
We sampled 1000 points each snapshot in the case (b) above without changing other setups to test our framework. In this case, WCR method still achieves a good result with the MRE less than $4\%$ within only 0.006s on the MacBook pro.

\begin{figure}[htp]
\centering
\subfigure[]{\includegraphics[width=0.45\textwidth]{./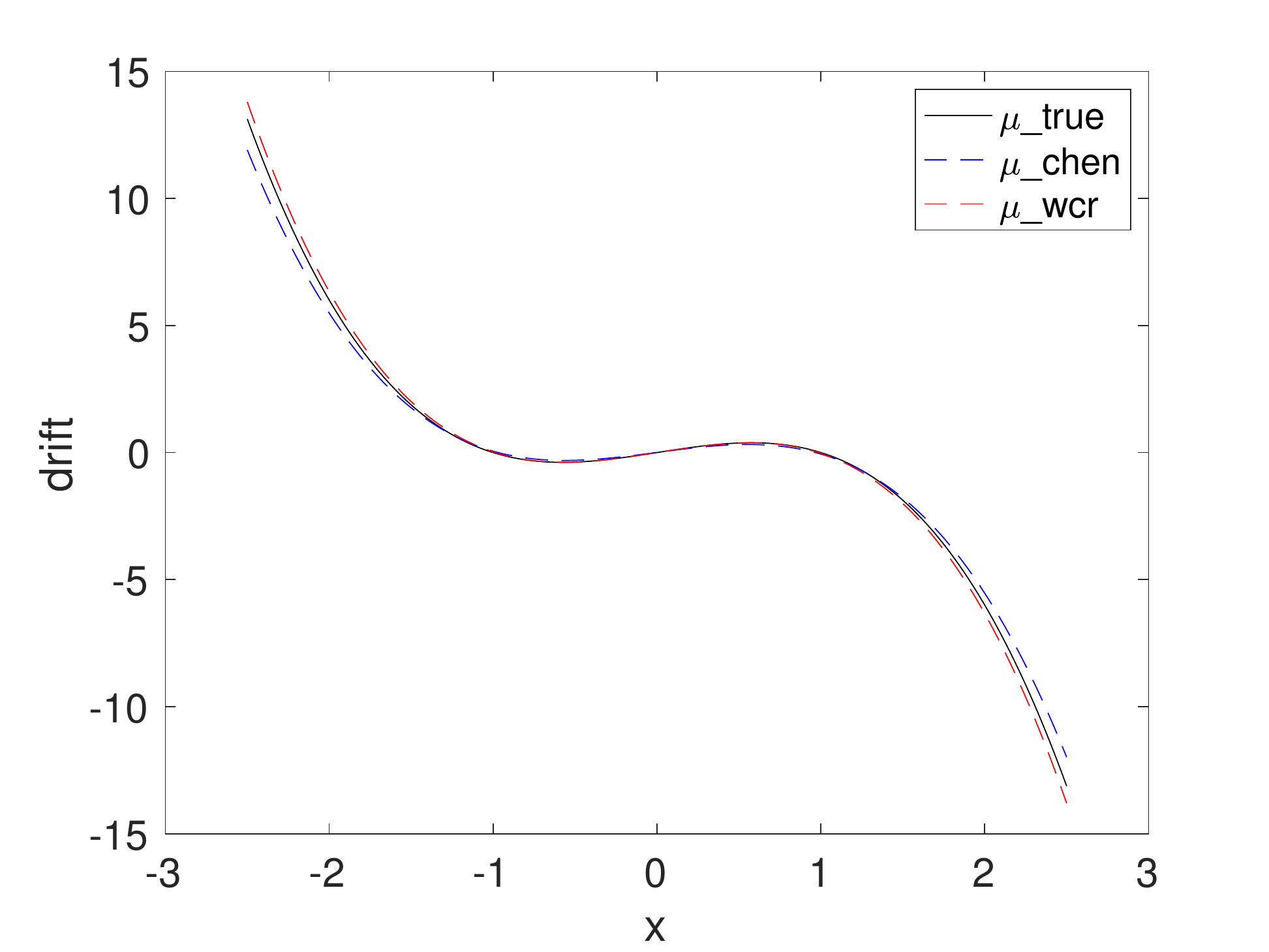}}
\subfigure[]{\includegraphics[width=0.45\textwidth]{./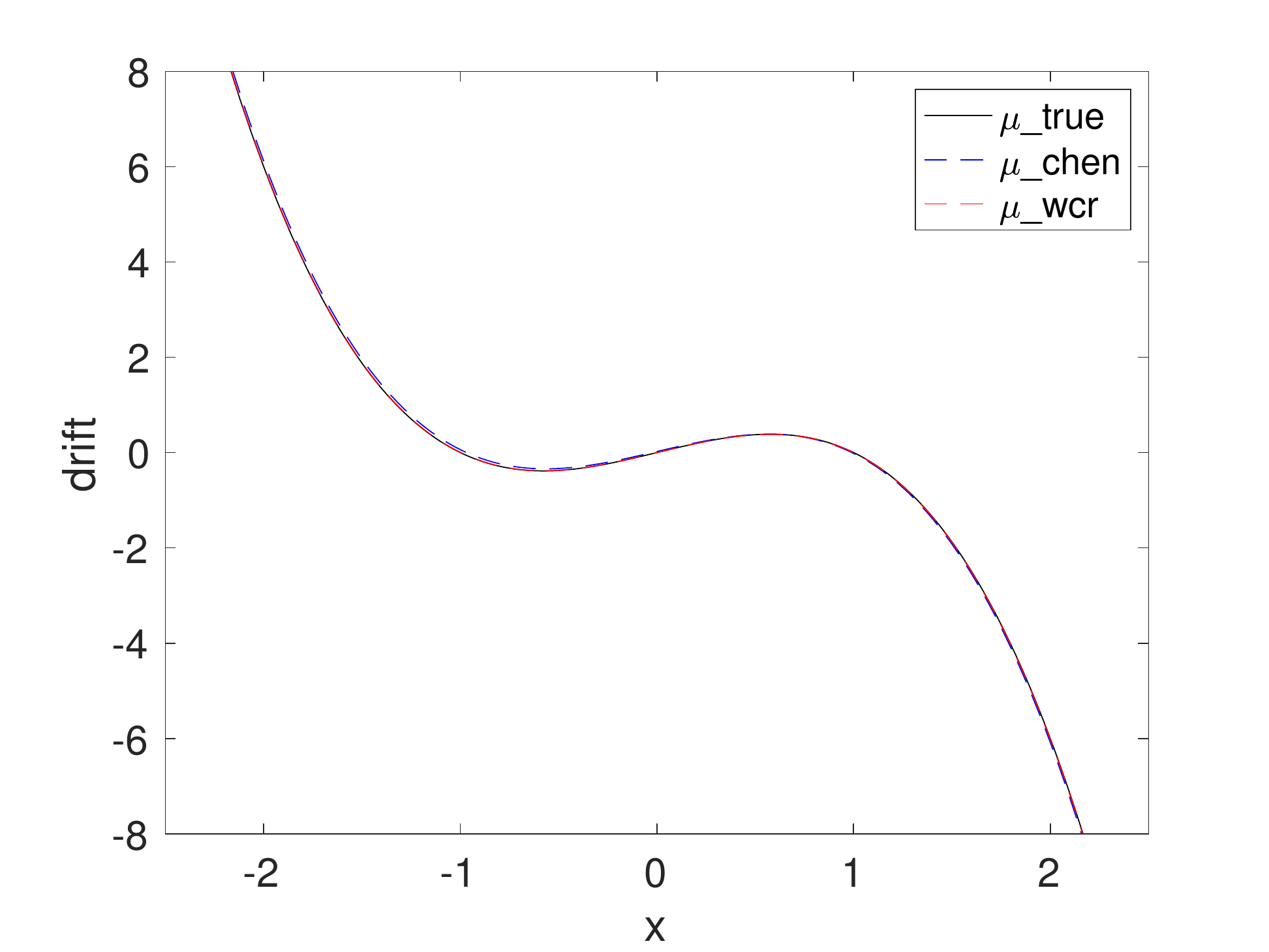}}
\caption{The results of 1d cubic polynomial problem compared with Chen's work (sota). Reveal the unknown drift and diffusion terms with 10000 samples of $X_t$ at different time snapshots: (a) Observations at $t = 0.1, 0.3, 0.5$; (b) Observations at $t=0.2, 0.5, 1$ where the samples are generated by the given SDE with drift term $\mu\_\text{true} = x-x^3$ and diffusion term $\sigma=1$. The inference results are denoted by $\mu\_{\text{true}}$, $\mu\_{\text{chen}}$ (sota) and $\mu\_{\text{wcr}}$ (ours).} \label{fig.1d3.PKduan}
\end{figure}

\begin{table}[htp]
\centering 
\begin{tabular}{cccccc}
\toprule Parameter & $\lambda_{0}$ & $\lambda_{1}$ & $\lambda_{2}$ & $\lambda_{3}$ & $\sigma$ \\
\hline
True parameters & 0 & 1 & 0 & $-1$ & 1 \\
\hline
(a) Chen & $0.0051$ & $0.8422$ & $-0.0071$ & $-0.8994$ & $1.0347$ \\
(a) WCR & $0$ & $\bf{1.0160}$ & $0$ & $\bf{-1.0457}$ & $\bf{1.0127}$ \\
\hline
(b) Chen & $0.0225$ & $0.9638$ & $-0.0010$ & $-1.0035$ & $1.0138$ \\
(b) WCR & $0$ & $\bf{0.9982}$ & $0$ & $\bf{- 1.0019}$ & $\bf{1.0042}$ \\
\bottomrule
\end{tabular}
\caption{The results of 1d cubic polynomial problem compared with Chen's work (sota). Reveal the unknown drift and diffusion terms with 10000 samples of $X_t$ at different time snapshots: (a) Observations at $t = 0.1, 0.3, 0.5$; (b) Observations at $t=0.2, 0.5, 1$ where the samples are generated by the given SDE with drift term $\mu\_\text{true} = x-x^3$ and diffusion term $\sigma=1$. The inference results are denoted by Chen (sota) and WCR (ours).} \label{tab.1d3.PKduan}
\label{tab.1d}
\end{table}

For sparse identification, its concept is rooted in the Occam's Razor principle\cite{domingos1999role, walsh1979occam}, which suggests that we should prioritize selecting the simplest model when uncovering the underlying dynamics of data. By choosing a model with the fewest non-zero coefficients, we not only enhance interpretability but also mitigate the risk of overfitting the data, thereby improving the generalization capability of the model. In our framework, sparsity serves as an optional feature rather than an essential requirement. If sparsity is not necessary for a particular analysis, a linear regression can still effectively reveal hidden dynamics. To demonstrate this, we conducted an experiment in case (b) and we replaced the sparse identification technique with a linear regression, achieving satisfactory results in Table \ref{tab.1d_sparse}.

\begin{table}[h]
    \centering
    \begin{tabular}{cccccc}
        \toprule
        Parameter & $\lambda_0$ & $\lambda_1$ & $\lambda_2$ & $\lambda_3$ & $\sigma$ \\
        \midrule
        True & 0 & 1 & 0 & -1 & 1 \\ 
        With sparse identification & 0 & 0.9982 & 0 & -1.0019 & 1.0042 \\ 
        Without sparse identification & 0.0217 & 0.9584 & -0.0118 & -0.9751 & 1.0082 \\
        \bottomrule
    \end{tabular}
    \caption{Experiment in section 3.1(I)(b) with drift term $\lambda_0+\lambda_1x+\lambda_2x^2+\lambda_3x^3$ and diffusion term $\sigma$. The experiment setup is same with section 3.1(I)(b).}
    \label{tab.1d_sparse}
\end{table}

\textbf{(II) Variable-dependent diffusion term problem.}  
Many attentions in the existing literature are paid to the problem with constant diffusion in revealing the hidden dynamics from data. But a more general setting is the non-constant diffusion, i.e., the variable-dependent diffusion. In this part, we would show that WCR method also works for the variable-dependent case. The variable-dependent stochastic equation 
\begin{equation*}
    dX_t = (X_t-X_t^3)dt + (1+X_t)dW_t
\end{equation*}
is considered as the true model to generate the raw data of the snapshots at tme $t=0, 0.2, 0.5, 1$.
Our task is to reveal the true model from the data by the parameterized SDE form 
\begin{equation}\label{eq.gs}
    dX_t = (\lambda_0 + \lambda_1 X_t + \lambda_2 X_t^2 + \lambda_3 X_t^3)dt + (\sigma_0+\sigma_1X_t)dW_t,
\end{equation}
where $\lambda_i, i=0,1,2,3$ and $\sigma_j, j=0,1$ are the tunable parameters. Note that, the weak form of the Fokker-Planck is leveraged in our method, where the drift term $\mu(x)$ and the $1\times 1$ diffusion matrix $D$ is revealed. By setting $D(x)=b_0 + b_1 x + b_2x^2$, WCR method gives the approximation of the values of $\lambda_i, i=0,1,2,3$ and $b_j, j=0,1,2$. And by the relation $D=\frac{1}{2}\sigma^2$, a nonlinear regression is applied on the diffusion matrix $D$ and the parameters $\sigma_i, i=0,1$ are approximated, giving the explicit form of the gorverning equation \eqref{eq.gs}. The results of the parameters are listed in Table  \ref{tab.1d3.diffusion} and WCR method still works well in the variable-dependent problem.

\begin{table}[htp]
    \centering
    \begin{tabular}{ccccccc}
    \toprule
    Parameter & $\lambda_0$ & $\lambda_1$ & $\lambda_2$ & $\lambda_3$ & $\sigma_0$ & $\sigma_1$ \\
    \midrule
    True & 0 & 1 & 0 & -1 & 1 & 1 \\
    \midrule
    WCR & 0 & 0.9688 & 0 & -1.0264 & 0.9955 & 1.0326 \\
    \bottomrule
    \end{tabular}
    \caption{The results of 1d cubic polynomial problem when diffusion term is not a constant. Reveal the unknown drift and diffusion terms with 10000 samples of $X_t$ at $t=0,0.2,0.5,1$ where the samples are generated by the given SDE with drift term $\mu\_\text{true} = x-x^3$ and diffusion term $\sigma=1+x$.} \label{tab.1d3.diffusion}
\end{table}

\textbf{(III) Quintic polynomial drift problem.}
Quintic polynomial drfit has fifth-order polynomial terms, raising the difficultis of the revealing the true form of the hidden stochastic dynamics. The problem becomes more subtle when the cofficients of the polynomial terms are in different scale. In this experiment,  we would show that for the high order polynomial drift term with high contrast coefficient, WCR method can still reach a good accuracy of modeling the data. 

The raw data are generated by integrating the following SDE from $t=0$ to $t=10$
\begin{equation}
    dX_t = \mu dt + dW_t,
\end{equation}
where the drift term is the high order polynomial form with high contrast coefficient as
\begin{equation}
   \mu \overset{\Delta}{=} -x(x-1)(x-2)(x-3)(x-4) = -24x+50x^2-35x^3+10x^4-x^5.
\end{equation}

In this experiment, WCR is applied on the aggregate data $\mathbb{X}$ to reveal the hidden stochastic dynamics. Two kinds of the data with time interval (a) $\Delta t =0.1$ and (b) $\Delta t= 0.5$ with $N=5000$ points in each time snapshot are considered. To model the data, we take the fifth order of the polynomial expansion for the drift term with 
\begin{equation*}
\mu(x)=\theta_0+\theta_1x+\theta_2x^2+\theta_3x^3+\theta_4x^4+\theta_5x^5,
\end{equation*}
where $\theta_i, i=0,1,\ldots 5$ are the tunable parameters to be determined by the data. The diffusion term is chosen as one tunable parameter $D_0$. 

In the collocation of the kernels, 200 Gaussian functions are sampled by randomly generating their expectations $\{\rho_i\}_{i=1}^{200}$ in the region of the data with the default standard variance $\gamma=1$. For the approximation of the temporal derivatives, the Milne method is applied. The computation costs within 2 seconds and the results are summarized in Table \ref{tab.1d5}. From the result, we can see that WCR method achieves a good performance in the higher problem with less than 3\% Max Relative Error (MRE) within seconds on the Macbook Pro.  
\begin{table}[htp]
    \centering
    \begin{tabular}{cccccccccc}
        \toprule
        Settings & \multicolumn{6}{c}{drift coefficients $\{\theta_i\}_{i=0}^5$} & diffusion $D_0$ & MRE & Time (s) \\
        \midrule
        True & 0 & -24 & 50 & -35 & 10 & -1 & 1 & - & -\\
        \midrule
        (a) $\Delta t=0.1$ & 0 & -23.8 & 49.6 & -34.7 & 9.92 & -0.991 & 1.002 & 0.87\% & 1.8 \\
        (b) $\Delta t=0.5$ & 0 & -24.9 & 51.6& -35.9 & 10.2 & -1.018 & 1.029 & 3.82\% & 0.4 \\
        \bottomrule
    \end{tabular}
    \caption{The results of one-dimensional high contrast problem with different time snapshot. Reveal the unknown drift and diffusion terms with (a) $\Delta t=0.1$; (b) $\Delta t=0.5$, where the samples are generated by the true SDE with drift term $\mu(x)=-24x+50x^2-35x^3+10x^4-x^5$ and diffusion term $\sigma=1$, gaussian functions $M=200$, samples number $N=5000$.} 
    \label{tab.1d5}
\end{table}

\textbf{(IV) General form of the basis dictionary.}
As mentioned in Section 2.4, the dictionary represents are adopted for the unknown terms. In the above experiments, we simply choose the polynomial basis as the dictionary for their simplicity and interpretability. And polynomials were primarily employed as the basis in the experimental investigations conducted in this paper. Nevertheless, it should be noted that the proposed method is not restricted to polynomial bases and can be extended to more general functions. In this part, we depict a more general function dictionary which consists of both polynomial and trigonometric functions to demonstrate the abilities of our methods for the general form of the approximations.

The experimental data in this subsection was obtained by discretizing the following stochastic differential equation \eqref{eq.cos} using the Euler-Maruyama method with a step size of 0.1 from $t=0$ to $t=1$. 
\begin{equation} \label{eq.cos}
    dX_t = (X_t+\cos(3X_t))dt+dB_t
\end{equation}

Then we expand the drift term using the following composite basis $ \Lambda $.
\begin{equation}
\begin{gathered}
    \Lambda = \left\{
    \begin{pmatrix} 1 \\ x \\ x^2 \\ x^3 \end{pmatrix} \otimes
    \begin{pmatrix} 1 \\ \cos{x} \\ \cos{2x} \\ \cos{3x} \end{pmatrix}
    \right\} = \left\{
    \begin{pmatrix}
        1 & \cos{x} & \cos{2x} & \cos{3x} \\
        x & x\cos{x} & x\cos{2x} & x\cos{3x} \\
        x^2 & x^2\cos{x} & x^2\cos{2x} & x^2\cos{3x} \\
        x^3 & x^3\cos{x} & x^3\cos{2x} & x^3\cos{3x}
    \end{pmatrix} \right\} \\ 
    \xlongequal{\mbox{flatten}}
    \{1, \cos{x}, \cos{2x}, \cos{3x}, x, x\cos{x}, x\cos{2x}, x\cos{3x}, \cdots, x^3\cos{2x}, x^3\cos{3x}\}
\end{gathered}
\end{equation}
The basis is composed of a product of polynomials with degree no more than 3 and cosine functions with frequencies of 0, 1, 2, and 3. Therefore, there are a total of 16 terms, and with the addition of diffusion represented by a constant, there are 17 parameters to be solved. For the selection of test functions, 20 Gaussian functions with a variance of 1 were chosen, and their means were sampled from the data region using the Latin Hypercub Sampling method. There are 10,000 samples at each time point. The results are shown in Table \ref{tab.cos}. It can be seen that all the redundant base terms have been successfully eliminated by sparse regression. In the face of the problem of trigonometric basis, the WCR method can still achieve good performance with less than 4\% maximum relative error.

\begin{table}[h]
    \centering
    \begin{tabular}{cccccc}
        \toprule
        coefficient & $x$ & $\cos{3x}$ & other drift terms & diffusion & MRE \\ 
        \midrule 
        True & 1 & 1 & 0 & 1 & - \\ 
        Learned & 1.029 & 1.038 & 0 & 1.0043 & 3.8\% \\ 
        \bottomrule
    \end{tabular}
    \caption{The results of one-dimensional trigonometric basis drift problem. Reveal the unknown dynamics with samples $N=10000$ and gaussian functions $m=20$. The drift term is $\mu(x)=x+\cos{3x}$ and diffusion term $\sigma=1$.}
    \label{tab.cos}
\end{table}

\textbf{(V) General drift term out of basis.}
% One may have doubts about the basis expansion in this framework, as the drift terms of stochastic differential equations with real-world backgrounds may not necessarily be representable by a finite set of basis functions.
To illustrate the performance of the basis expansion, we investigate the scenario of the complex drift term without complete expansion of the basis. Namely, in this subsection, we try to reveal the hidden dynmaics from data of the given form 
\begin{equation}
    dX_t = -2X_te^{-X_t^2}dt + dW_t
\end{equation}
where the drift term $-2xe^{-x^2}$ can not be completely expanded in finite polynomial basis. However, we would show that with the order of the polynomial basis increases, the revealing results still meet the needs.

We compare the $L_2$ relative error between the approximate solution under polynomial basis and the true drift terms. The data is observed at $t=0,0.1,0.2,\cdots,1$ with 10,000 samples at each snapshot.

The diffusion terms and $L_2$ relative errors of the drift terms under different orders of polynomial basis are presented in Table \ref{tab.1d_order}. Figure \ref{fig.1d_order} illustrates the functional graph of the drift terms. The calculation interval for relative error is $[-1, 1]$, because our method is a supervised learning approach, and the values of drift terms in regions without data are inherently unlearnable. As the order of polynomial basis increases, the results obtained by WCR gradually approach the true values of the drift terms. The best learning performance is achieved when the order is 9, with a relative error of 0.05\% and a diffusion term of 0.9994.

\begin{table}[h]
    \centering
    \begin{tabular}{cccccccc}
        \toprule
        basis order & 3 & 4 & 5 & 6 & 7 & 8 & 9  \\
        \midrule
        $L_2$ relative error & 0.0734 & 0.0728 & 0.0242 & 0.0205 & 0.0042 & 0.0008 & 0.0005 \\
        diffusion & 0.9331 & 0.9332 & 0.9653 & 0.9699 & 0.9863 & 0.9984 & 0.9994 \\ 
        \bottomrule
    \end{tabular}
    \caption{The results of the drift terms $-2xe^{-x^2}$ under various orders of polynomial basis. The $L_2$ relative error of the drift terms is calculated over the interval $[-1, 1]$. The data was obtained from 10,000 observations at $t=0,0.1,0.2,\cdots,1$. And 20 Gaussian functions were used for revealing the dynamics.}
    \label{tab.1d_order}
\end{table}

\begin{figure}
\centering
% \subfigure[]{\includegraphics[width=0.48\textwidth]{./}}
% \subfigure[]{\includegraphics[width=0.48\textwidth]{./}}
\includegraphics[width=0.55\textwidth]{./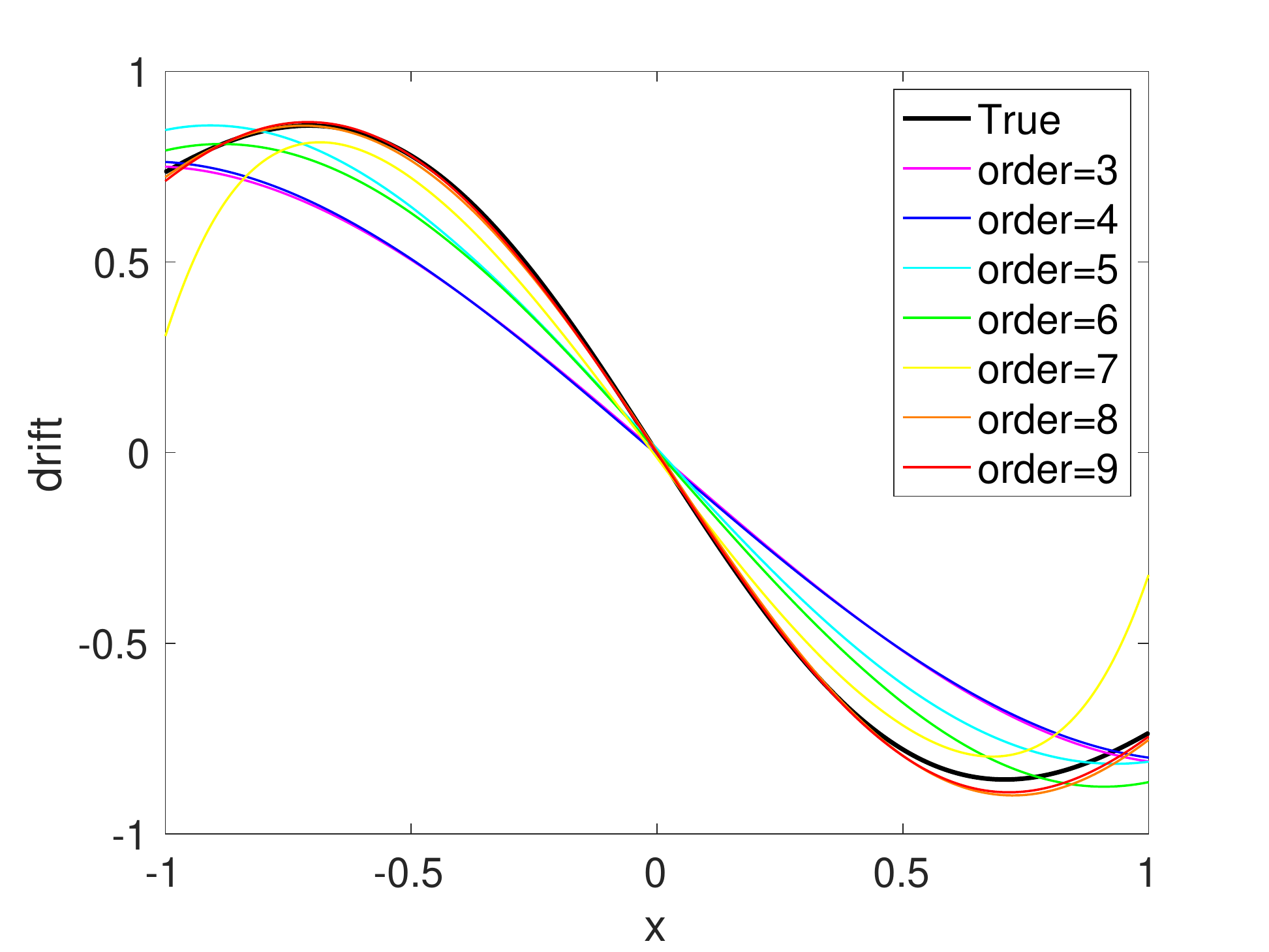}
\caption{Functional graphs of the drift terms $-2xe^{-x^2}$ revealing by WCR method under different orders of polynomial basis from 3 to 9.}
\label{fig.1d_order}
\end{figure}

%%%%%%%%%%%%%%%%%%%%%%%%%%%%%%%%%%%%%%%%%%%%%%%%%%%%%%%

\subsection{2-dimensional problem with coupled drift terms}
Sombrero potential $V$ is a well-known potential form that the symmetry breaking is triggered in the quantum mechanics. The gradient of the potential acts as the force and gives the drift term of the stochastic process as $\mu = \nabla V$.
% Figure \ref{fig.potential} shows the sombrero potential $V$ and its gradient $\mu_1=\partial_{x_1}V$ and $\mu_2=\partial_{x_2}V$ in the two dimensional space.
 The drift term induced by the Sombrero potential has the coupled terms in each dimension, i.e., $\mu_1$ and $\mu_2$ are not independent. To illustrate the abilities of revealing hidden stochastic dynamics with coupled drift terms, we apply WCR method on the aggregate data generated by the 2-dimensional Brownian motion with the coupled drift term 
 \begin{equation}\label{eq.coupled}
  \bf{\mu} = - \nabla V = 
  \begin{pmatrix}
  10x_1-4x_1^3-4x_1x_2^2\\
  10x_2-4x_2x_1^2-4x_2^3
\end{pmatrix},
 \end{equation}
 which is induced by the gradients of the Sombrero potential  
\begin{equation}
    V = -5\|\bf{x}\|^2 +  \|\bf{x}\|^4= -5(x_1^2 + x_2^2) + (x_1^2 + x_2^2)^2.
\end{equation}

% \begin{figure}
% \centering
% \subfigure[]{\includegraphics[width=0.45\textwidth]{./}}
% \subfigure[]{\includegraphics[width=0.45\textwidth]{./}}
% \caption{
% The 2-dimensional sombrero potential (a), and its derivatives (b) with respect to the spatial variables.
% }
% \label{fig.potential}
% \end{figure}

The true model decorated by the coupled drift \eqref{eq.coupled} has the SDE form
 \begin{equation}\label{eq.som}
    d\bf{x} =  
    \begin{pmatrix}
        c_{11}x_1^1 + c_{12}x_1^3 + c_{13} x_1x_2^2 \\ 
        c_{21}x_2^2 + c_{22} x_1^2x_2 +c_{23} x_2^3
    \end{pmatrix} dt +\begin{pmatrix} \sigma_1& 0\\ 0 & \sigma_2\end{pmatrix} dW_t,
\end{equation}
where $c_{11}=c_{21}=10$, $c_{12}=c_{13}=c_{22}=c_{23}=-4$ corresponding to the coupled drift term \eqref{eq.coupled} and $\sigma_1=\sigma_2=1$ for simplicity. 

The raw data are generated by integrating the SDE from $40, 000$ initial samplings from the Gaussian distributions at $t=0$ to $t=15$ using Eular-Maruyama method. Take $N=20, 000$ samples out of the total $40, 000$ points in each snapshot and collect totally $L=151$ snapshots at $t=0, 0.1, \ldots, 15$, we have the aggregate data set $\mathbb{X}=\{\mathbb{X}\}_{i=1}^L=\{\{\bf{x}_i^j\}_{j=1}^{N}\}_{i=1}^L$ for the experiment. In this subsection, we apply WCR method on the data to reveal the true model and give a direct illustration of the performance of our method.

To reveal the hidden dynamics of the 2-dimensional stochastic process, $200$ gaussian kernels are randomly sampled in the regime of the data and the Milne method \eqref{eq.am} is used to approximate the temporal derivatives.  The elements of the unknown drift and coefficient term are approximated by the linear combinations of the forth-order complete polynomial basis
\begin{equation}
\Lambda = \{1, x_1, x_2, x_1^2, x_1x_2, x_2^2, x_1^3, x_1^2x_2, x_1x_2^2, x_2^3,, x_1^4, x_1^3x_2, x_1^2x_2^2, x_1x_2^3, x_2^4\},
\end{equation}
with $|\Lambda|=15$.  For simplicity we denote the basis vector $\bf{\Lambda}=(1, x_1, x_2, \ldots, x_2^4)^T$ which is the vectorization of $\Lambda$. Note that, the highest order of the drift term is three in the true model, but we have used a higher order term to approximate the unknown terms of both drift and diffusion terms. It follows that the unknown drift and diffusion terms are approximated by the following form
\begin{equation}
    \bf{\mu} = \begin{pmatrix}
    \mu_1\\
    \mu_2\\
    \end{pmatrix} =\begin{pmatrix}\bf{\zeta}_1^\mu\cdot \bf{\Lambda}\\ \bf{\zeta}_2^\mu\cdot \bf{\Lambda} \end{pmatrix}, \quad
    D = \begin{pmatrix} D_{11} & D_{12} \\ D_{21} & D_{22}\end{pmatrix}
    = \begin{pmatrix} \bf{\zeta}_{11}^D\cdot\bf{\Lambda} & \bf{\zeta}_{12}^D\cdot\bf{\Lambda} \\ \bf{\zeta}_{21}^D\cdot\bf{\Lambda} & \bf{\zeta}_{22}^D\cdot\bf{\Lambda}\end{pmatrix},
\end{equation}
where $\bf{\zeta}^{\mu}_{i}\in\mathbb{R}^d,\bf{\zeta}^D_{ij}\in\mathbb{R}^d$ are coefficient vectors with dimension $d=|\Lambda|=15$. Thus for each drift term and each element of the diffusion matrix, we have 15 coefficients, making a total of 90 parameters to be revealed. By a sparse linear regression, instead of the 90 coefficients for a forth-order complete polynomial basis, only 8 coefficients corresponding to those in \eqref{eq.som} are listed for simplicity, as the others are identified correctly as zero. All non-zero terms of the true model are not missed and no redundant coefficients which should be zeros are not superfluous by WCR method. The results are shown in Table \ref{tab.sombrero}. The Max Relative Error (MRE) is about $2.42\%$ of the coefficients. The experiment is done on the Macbook pro within $21$ seconds. The results in Table \ref{tab.sombrero} illustrate the abilities of WCR method for revealing hidden dynamics with coupled terms under high-order expansion. 
\begin{table}
\centering
\begin{tabular}{ccccccccc}
\toprule Parameter & $c_{11}$ & $c_{12}$ & $c_{13}$ & $c_{21}$ & $c_{22}$ & $c_{23}$ & $\sigma_1$ & $\sigma_2$\\
\midrule
True parameters & 10 & -4 & -4 & 10 & -4 & -4 & $1$ & 1 \\
\midrule
 WCR & 9.7823 & -3.9032 & -3.9364 & 9.8495 & -3.9441 & -3.9409 & $0.9881$ & 1.0076 \\
\bottomrule
\end{tabular}
\caption{The results of the 2-dimensional problem with Sombrebro potiential. Reveal the unkown drift and diffusion terms with the expansion of forth-order complete polynomial basis. No redundant coefficients have been learned with only 8 out of the 90 cofficients nonzero corresponding to the true model. The Max Relative Error of the non-zero coefficients is 2.42\%.
}
\label{tab.sombrero}
\end{table}

% \begin{figure}
% \centering
% \subfigure[]{\includegraphics[width=0.9\textwidth]{./}}
% \subfigure[]{\includegraphics[width=0.9\textwidth]{./}}
% \subfigure[]{\includegraphics[width=0.9\textwidth]{./}}
% \caption{The dynamics of the Fokker-Planck equation of the Sombrero potential case with learned parameters (a) compared with the true model (b) and the error shows in (c).}
% \label{fig.fp}
% \end{figure}

%%%%%%%%%%%%%%%%%%%%%%%%%%%%%%%%%%%%%%%%%%%%%%%%%%%%%%%
\subsection{Multi-dimensional problem} \label{sec.multi}
Many methods depending on the integration over space always get stuck when the dimension of the data increases because of the exponential increase of the computational cost. Even under three or four dimension, the integration is computationally expensive, despite of the incapacity for higher dimensions such as 10d or 20d.   In the contrast, WCR method takes a more subtle strategy to avoid the direct computation of the integral. Thanks to the weak form, the spatial derivatives of the probability function have been transferred to the test function, making the integral simply computed by the summation over the samples avoiding the curse of the dimensionality. Hence WCR method can be naturally extended to the higher dimension other than one or two dimension. 

In this subsection, we take the same setup consistent with the 3d and 4d problems in  \cite{chen2021solving} and to show that WCR method can achieve a high accuracy within seconds. The true model is d-dimensional extension of the SDE \eqref{eq.x-x3}  with the form
\begin{equation} \label{eq.d}
    dX^i_t = \left(X^i_t-(X^i_t)^3\right)dt + dW^i_t, \quad i=1,2,\cdots,d,
\end{equation}
where the drift term is $\mu_i = x_i - x_i^3$ in the $i$-th dimension. 
The raw data are generated by the true model using Euler-Mayaruma method.  To get the aggregate data set, $100, 000$ samples each snapshot are taken and the snapshots at (i) $t=0.1,0.3,0.5,0.7,1$ (ii) $t=0.1,0.2,0.3,0.5,0.7,0.9,1$ are collected as the experimental data. 

In the experiment, the unknown drift term in each dimension is approximated by 
\begin{equation}
\hat{\mu}_i = \theta_0 + \theta_1 x_i + \theta_2 x_i^2 + \theta_3 x_i^3, 
\end{equation}
and the diffusion term approximated by a tunable parameter $D_i$,  $i=1,2,\ldots, d$. $100$ Gaussian kernels are sampled by the expectations using LHS method with the default standard deviation where $\gamma=1$.  We list the Max Relative errors and the computational time in Table \ref{tab.3d4d} and the results show our WCR method achieves a high accuracy in the 3d and 4d cases within seconds on the MacBook Pro. Figure \ref(fig.PK3d4d) depicts the learned drift terms compared with the true ones.

\begin{table}[htp]
    \centering
    \begin{tabular}{cccccc}
         \toprule
         & 3D-(i) & 3D-(ii) & 4D-(i) & 4D-(ii)\\
         \midrule
         MRE & 3.39\% & 1.86\% & 7.04\% & 3.18\% \\
         Time(s) & 3.6 & 4.7 & 6.1 & 7.9 \\
         \bottomrule
    \end{tabular}
    \caption{The Max Relative Error of the learned coefficients and the compuational time on the MacBook Pro of revealing the unknown 3d, 4d cubic polynomial problems. The aggregate data $\mathbb{X}$ is composed of the time snapshots at (i) $t=0.1,0.3,0.5,0.7,0.9$ (ii) $t=0.1,0.2,0.3,0.5,0.7,0.9,1$ with $100, 000$ points in each snapshot. The unknown dynamics is approximated by the drift $\hat{\mu}_i = \theta_0 + \theta_1 x_i + \theta_2 x_i^2 + \theta_3 x_i^3$ and diffusion $D_i$ in each dimension with tunable parameters $\theta_i$ and $D_i$, $i=1,2,\ldots, d$.
And  $100$ gaussian kernels are sampled to give the composed linear system.} \label{tab.3d4d}
\end{table}

\begin{figure}
\centering
\subfigure[]{\includegraphics[width=0.48\textwidth]{./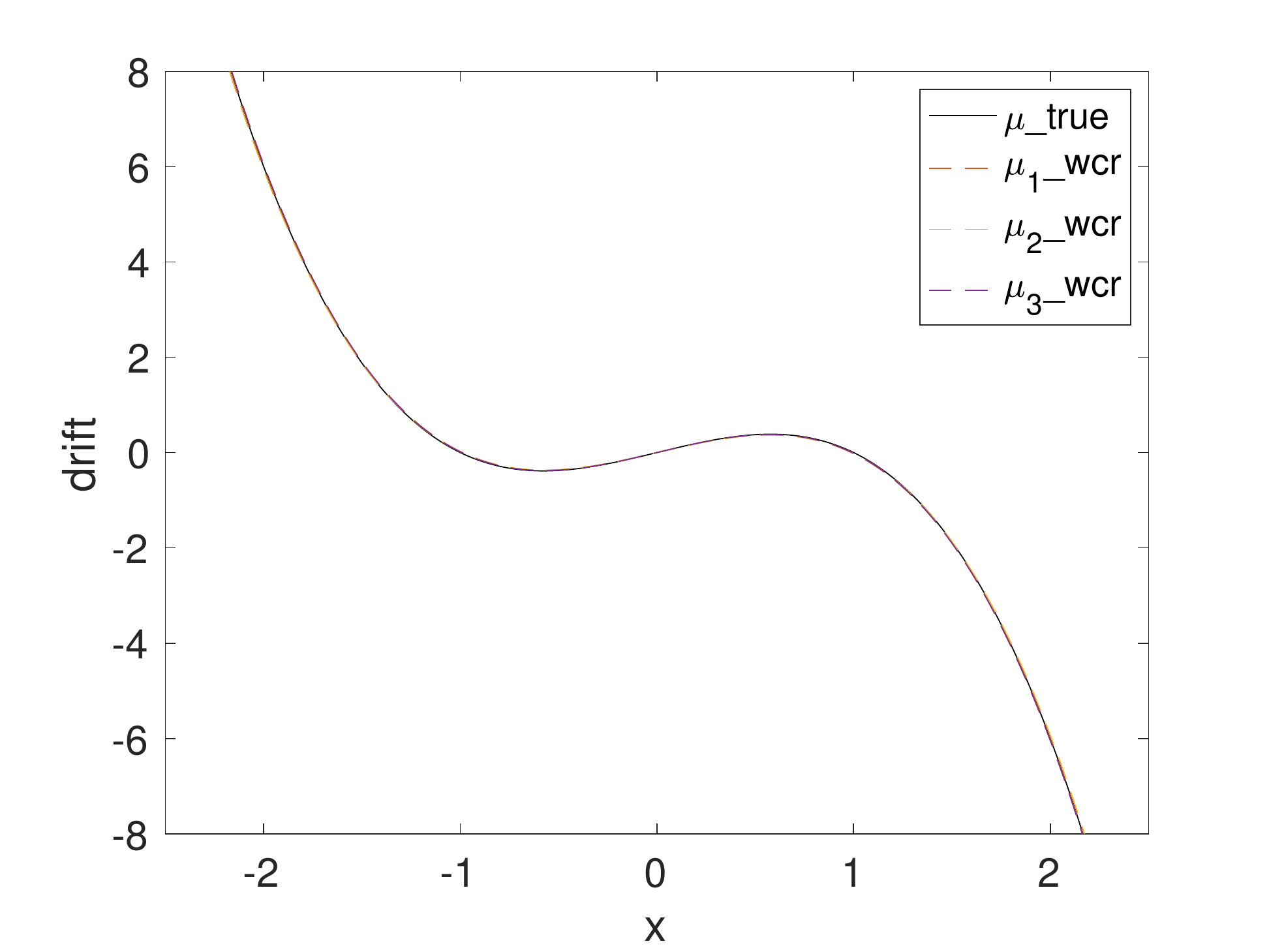}}
\subfigure[]{\includegraphics[width=0.48\textwidth]{./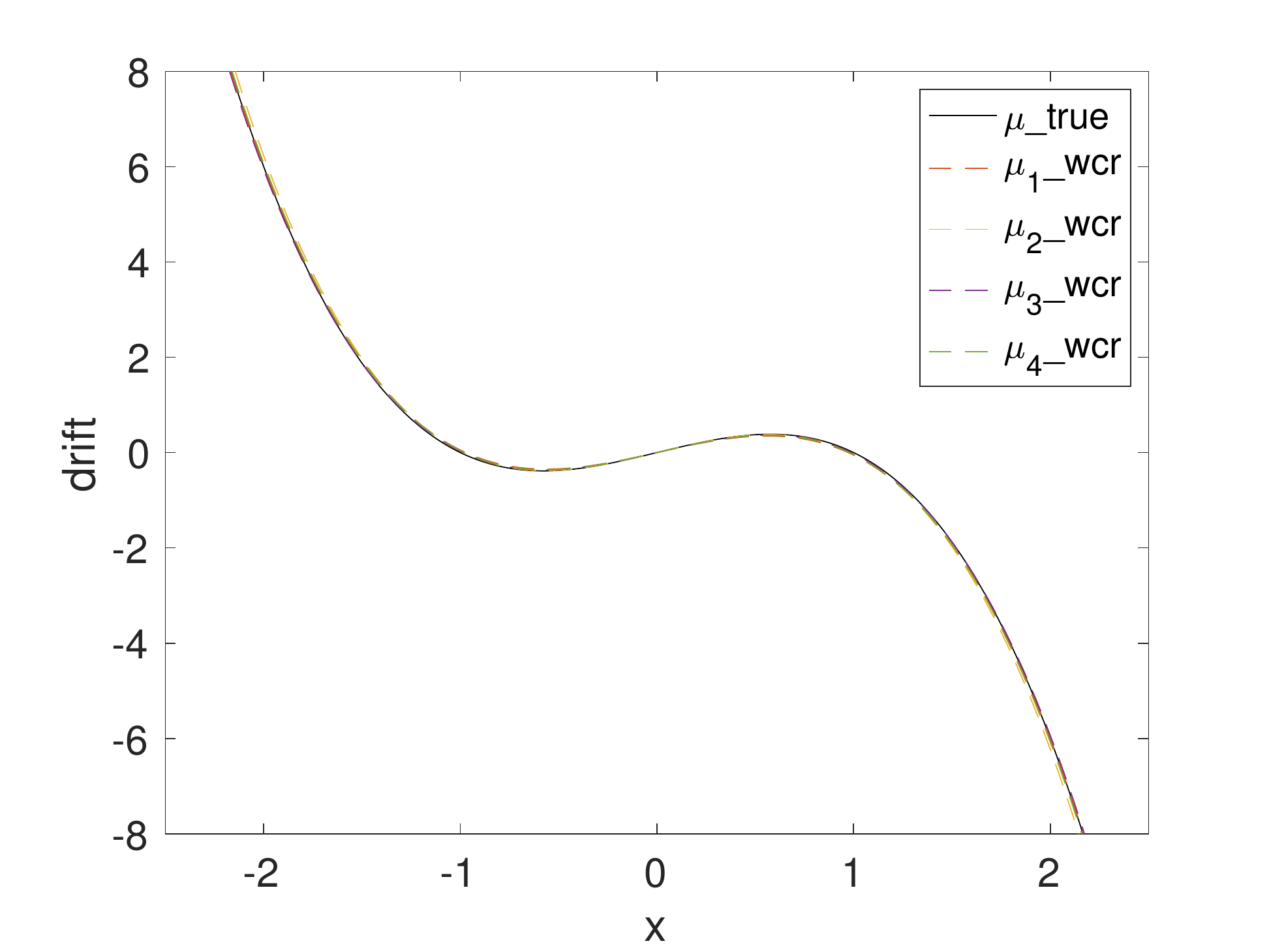}}
\caption{The learned drift terms of the 3d and 4d problems. Reveal the unkown drift and diffusion terms with $100, 000$ samples each snapshot at time $t = 0.1, 0.3, 0.5, 0.7, 1$ of (a) 3-dimensional problem; (b) 4-dimensional problem, where the samples are generated by the given SDE with drift term $\mu\_\text{true} = x-x^3$ and diffusion term $\sigma=1$ in per dimension. The inference results are denoted by $\mu\_\text{true}$ and $\mu_i\_\text{wcr}$ for the $i$-th dimension.} \label{fig.PK3d4d}
\end{figure}

%%%%%%%%%%%%%%%%%%%%%%%%%%%%%%%%%%%%%%%%%%%%%%%%%%%%%%%
\subsection{High-dimensional problem}

For three or four dimensional data, WCR method shows a high accuracy of revealing the hidden stochastic dynamics. Higher dimensional problem especially over ten dimensional brings more challenges to the modeling task because of the so-called curse of the dimensionality. In this subsection, we would show that WCR method can be applied on the high dimensional problems thanks to the Monte-Carlo approximation of the weak form. We consider the 10 and 20 dimensional aggregate data generated by the true model 
\begin{equation}
    dX_t^i = \left(X_t^i - (X_t^i)^3\right)dt + dW_t^i, 
    \qquad i=1, 2, \cdots, d,
\end{equation}
with $d=10, 20$ using Euler-Maruyama method. Several time snapshots of the data at time $t=0,0.1,0.2,0.3,\cdots,1$ are used as the experiment data. 

In the experiemnt, the fouth order basis is used to approximate the drift term in each dimension as $\mu_i=\theta_0+\theta_1x_i+\theta_2x_2^2+\theta_3x_3^3$ and the diffusion term is approxiamted by a tunable parameter $\sigma_i$, making totally $5d$ tunable parameters to be learned. Milne method is used for the temporal derviatives. Different number of the data samples in each snapshot and the Gaussian kernels are considered in the high dimensional problem.

In the 10-dimensional problem, we consider the four cases for samples and kernels (a) $10, 000$ samples of $X_t$ using $1000$ Gaussian kernels; (b) $100, 000$ samples of $X_t$ using $1000$ Gaussian kernels; (c) $10, 000 $samples of $X_t$ using $10, 000$ Gaussian kernels; (d) $100, 000$ samples of $X_t$ using $10, 000$ Gaussian kernels. The four cases are the combinations of the $10, 000$ and $100, 000$ samples with $1000$ and $10, 000$ Gaussian kernels.  Figure \ref{fig.10d} gives the display of the learned drift terms compared with the true model in each case. From the Figure, it is easy to check that the learned results get better when the number of the sample points and Gaussian kernels increases.
And the Max Relative Error for the best result in case (d)  achieves less than $7.8\%$ within 20 minutes. 
We also employ a coupled system in ten dimensions to showcase the capabilities of the WCR method. For detailed information, please refer to Appendix \ref{app.couple}.

\begin{figure}[htp]
\centering
\subfigure[]{\includegraphics[width=0.49\textwidth]{./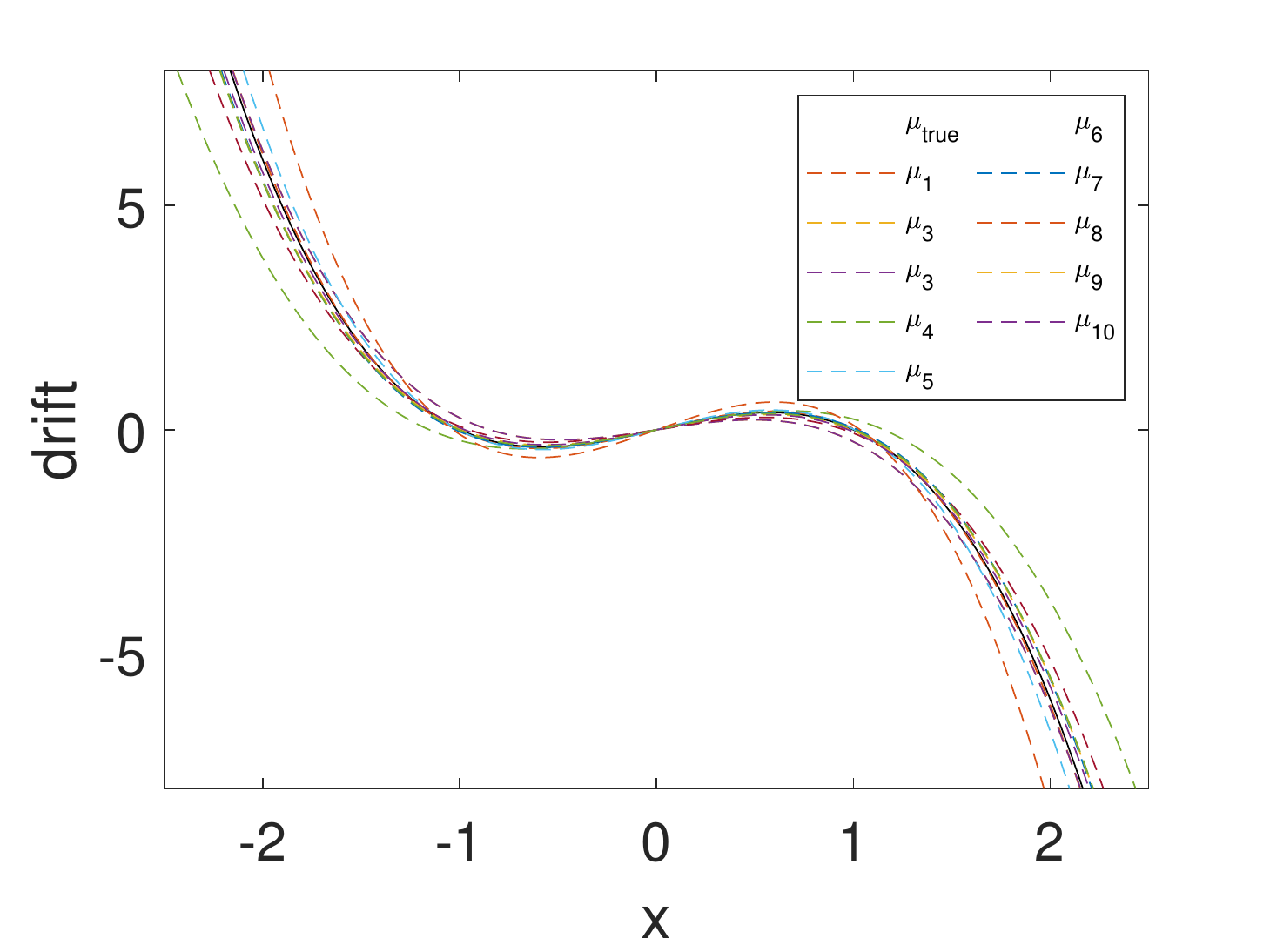}}
\subfigure[]{\includegraphics[width=0.49\textwidth]{./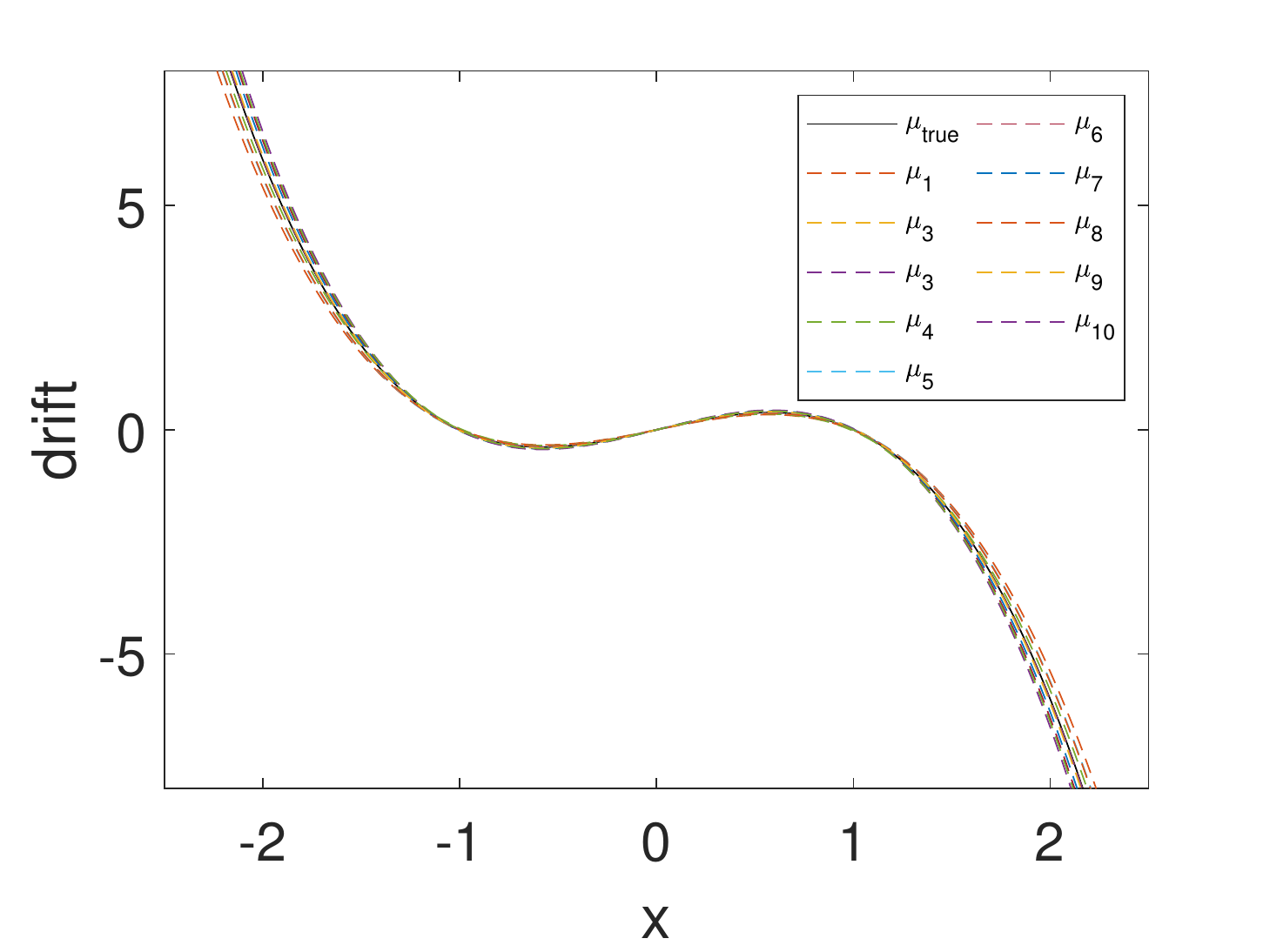}}
\subfigure[]{\includegraphics[width=0.49\textwidth]{./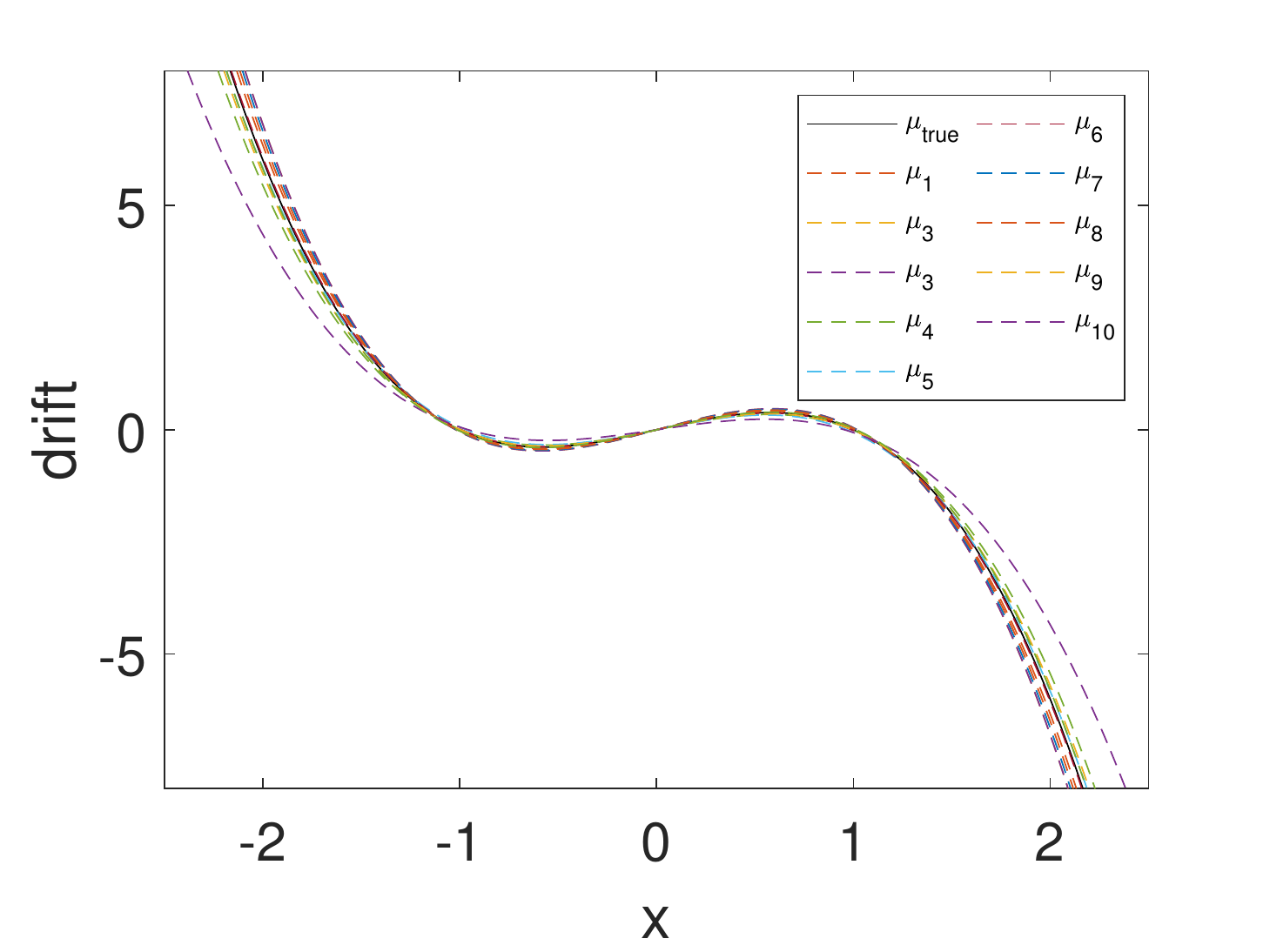}}
\subfigure[]{\includegraphics[width=0.49\textwidth]{./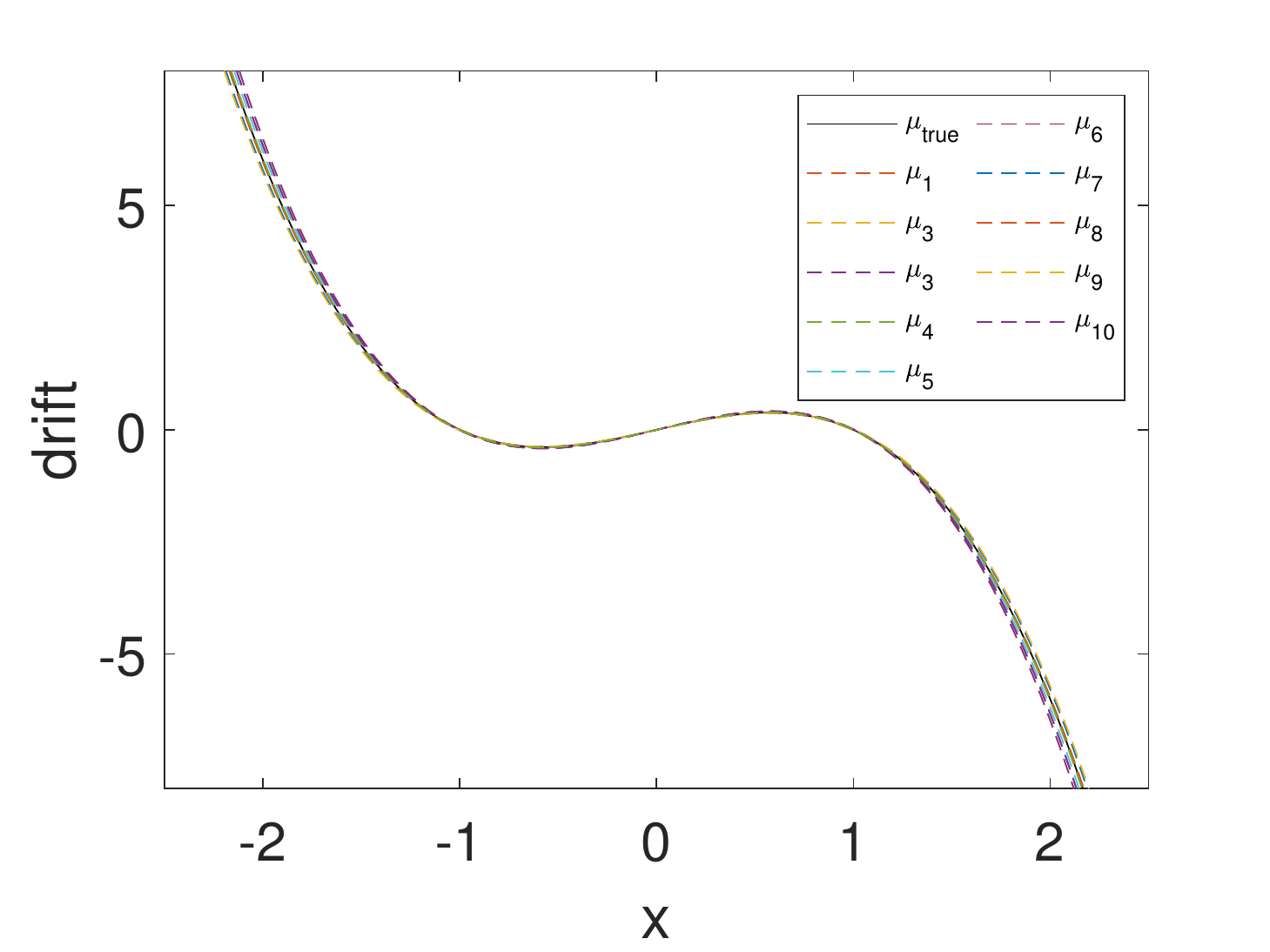}}
\caption{The results of 10d problems with different number of samples per time snapshot and gaussian kernels. Reveal the unkown drift and diffusion terms with (a) 10000 samples of $X_t$ using 1000 gaussian kernels; (b) 100000 samples of $X_t$ using 1000 gaussian kernels; (c) 10000 samples of $X_t$ using 10000 gaussian kernels; (d) 100000 samples of $X_t$ using 10000 gaussian kernels at time snapshots  $t = 0, 0.1, 0.2, 0.3, 0.4, 0.5, 0.6, 0.7, 0.8, 0.9, 1$ of 10-dimensional problem, where the samples are generated by the given SDE with drift term $\mu\_\text{true} = x-x^3$ and diffusion term $\sigma=1$ in per dimension. The inference results are denoted by $\mu_i$ for the learned drift in the $i$-th dimension.} \label{fig.10d}
\end{figure}

In the 20-dimensional case, we take 10000 gaussian kernels and  100000 samples of $X_t$. The experiment is completed within 47 minutes on the MacBook Pro and the result is shown in Figure \ref{fig.20d}

\begin{figure}[htp]
\centering
\includegraphics[width=0.8\textwidth]{./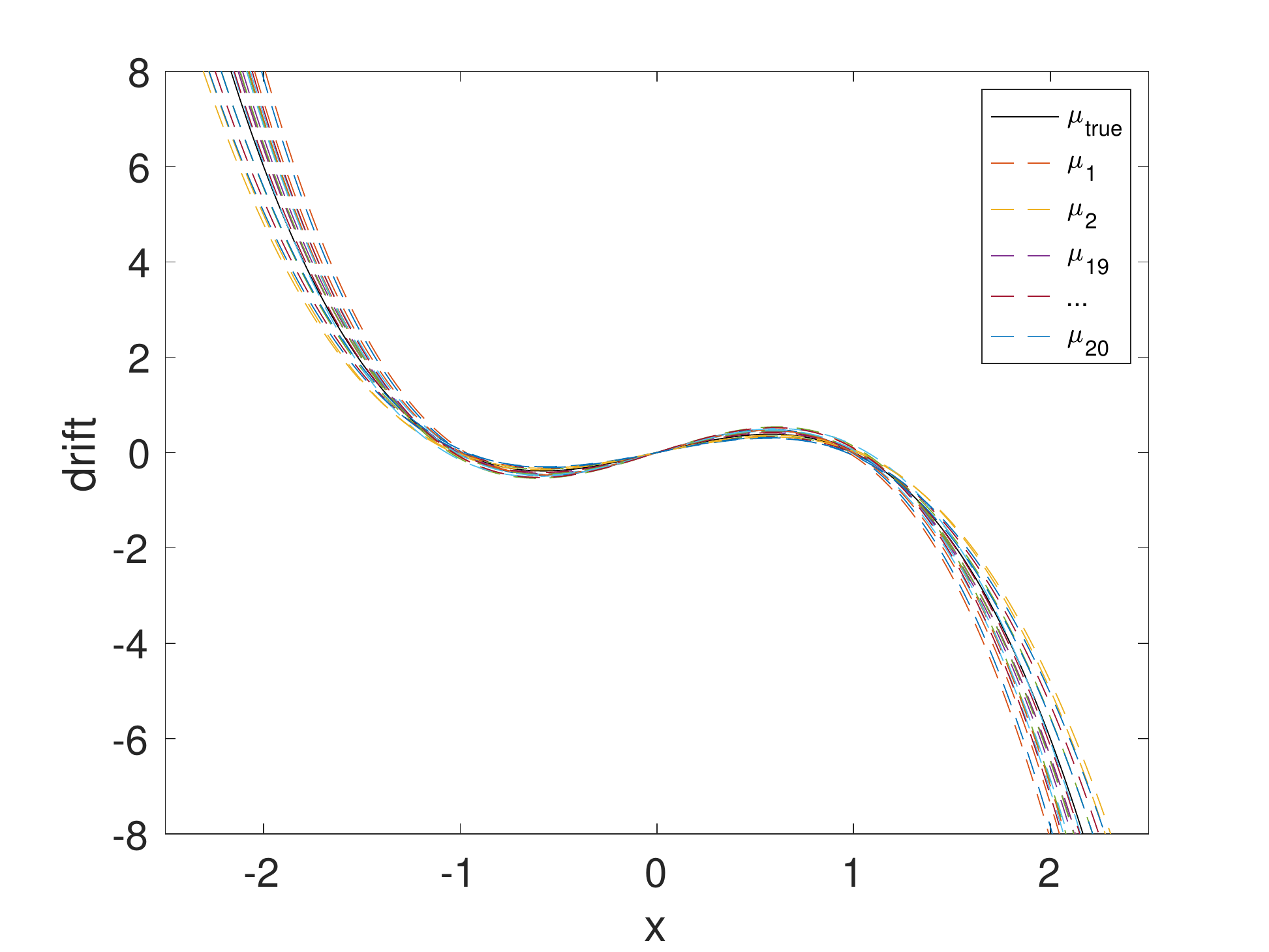}
\caption{The results of 20-dimensional problems with 100000 samples of $X_t$ using 10000 gaussian kernels at time  $t = 0, 0.1, 0.2, 0.3, 0.4, 0.5, 0.6, 0.7, 0.8, 0.9, 1$, where the samples are generated by the given SDE with drift term $\mu\_\text{true} = x_i-x_i^3$ and diffusion term $\sigma=1$ in per dimension. The inference results are denoted  as $\mu_i$ for the $i$-th dimensional drift.} 
\label{fig.20d}
\end{figure}

%%%%%%%%%%%%%%%%%%%%%%%%%%%%%%%%%%%%%%%%%%%%%%%%%%%%%%%

\subsection{Performance on data of different qualities} \label{sec.performance}

In real applications, the data obtained by various means usually contain many missing points and even flaws. The data may be also noisy with useful signals deeply buried. These facts make analyzing and extracting useful models from the data hard and tricky. We have shown that WCR performs well with only several snapshots with non-equally spaced data in the one dimensional case. In this subsection, we further investigate the performance of WCR on the data with different qualities, such as short and long time intervals, different number of the samples in the snapshots, and noises.

The true model in this experiment has the following form 
\begin{equation}
\begin{split}
dx_1 &= -0.5 x_1 dt + dW_t\\
dx_2 &= -0.7 x_2 dt + dW_t\\
dx_3 &= -x_3 dt + dW_t,
\end{split}
\end{equation}
where the three dimensions are not coupled for the simplicity and the mainly focus on the data qualities. The raw data are generated by integrating the true model
from $t=0$ to $t=10$, and different kinds of snapshots are chosen as the aggregate data. Noise is added for the study of the noise affect. 

To reveal the hidden dynamics, the unknown drift term is expanded by the first order complete polynomials as 
\begin{equation}
    \mbox{drift} = 
    \begin{pmatrix}
        \theta_{00} + \theta_{01}x_1 + \theta_{02}x_2 + \theta_{03}x_3 \\
        \theta_{10} + \theta_{11}x_1 + \theta_{12}x_2 +\theta_{13}x_3 \\
        \theta_{20} + \theta_{11}x_1 +\theta_{22}x_2 + \theta_{23}x_3
    \end{pmatrix},
\end{equation}
and the diffusion matrix is approximated by diagonal matrix 
$
D = \begin{pmatrix}
\sigma_1 &&\\
& \sigma_2&\\
&&\sigma_3\\
\end{pmatrix}
$,
where $[\theta_{ij}]$ and $[\sigma_i]$ are tunable parameters to be revealed.  

\textbf{Time interval and sample number.}  We investigate in this part how the time interval and the sample number of the data affect the accuracy of the results in our WCR method.  We consider four cases of the time interval $\Delta t_1 = 0.1$, $\Delta t_1 = 0.2$, $\Delta t_1 = 0.5$ and $\Delta t_1 = 0.9$ of the snapshots in the total interval $[0, 10]$.  To be concise, the total number of the snapshots $L=\frac{T}{\Delta t} +1$, where $T=10$ and $\Delta t$ is one of the time interval. For example we have $L=101$ snapshots in the case of $\Delta t_1=0.1$ with each snapshot are equally spaced with $\Delta t_1=0.1$. For the number of the samples in each snapshot, we also consider four cases with $N_1=1000$, $N_2=2000$, $N_3=5000$, and $N_4=10000$. Thus we have totally tested $4\times 4=16$ cases to investigate the performance of WCR on the time interval and sample number of the data. 

In all experiments, other setups are kept the same where 100 Gaussian functions with standard variance $\gamma=1$ are sampled as the test functions. And the Milne method is used for the temporal derivatives. Three different random seeds are used in each experiment for the average result. All experiments share the same three seeds for the fair comparison. 

We summarize all the results in Figure \ref{fig.3d.linear}. With only $N=1000$ samples and mild time interval $\Delta t=0.9$, the maximum relative error can reach a good accuracy around $5\%$. Further, with the increasing number of the samples in each snapshot and the decreasing time interval from $\Delta t_4=0.9$ to $\Delta_1=0.1$, from $N_1=1000$ to $N_2=10000$, the results generally get better. And all of the cases result in a low Max Relative Error giving the proof of the robustness of our method in relative poor data qualities. 

\begin{figure}[htp]
\centering
\subfigure[]{\includegraphics[width=0.38\textwidth]{./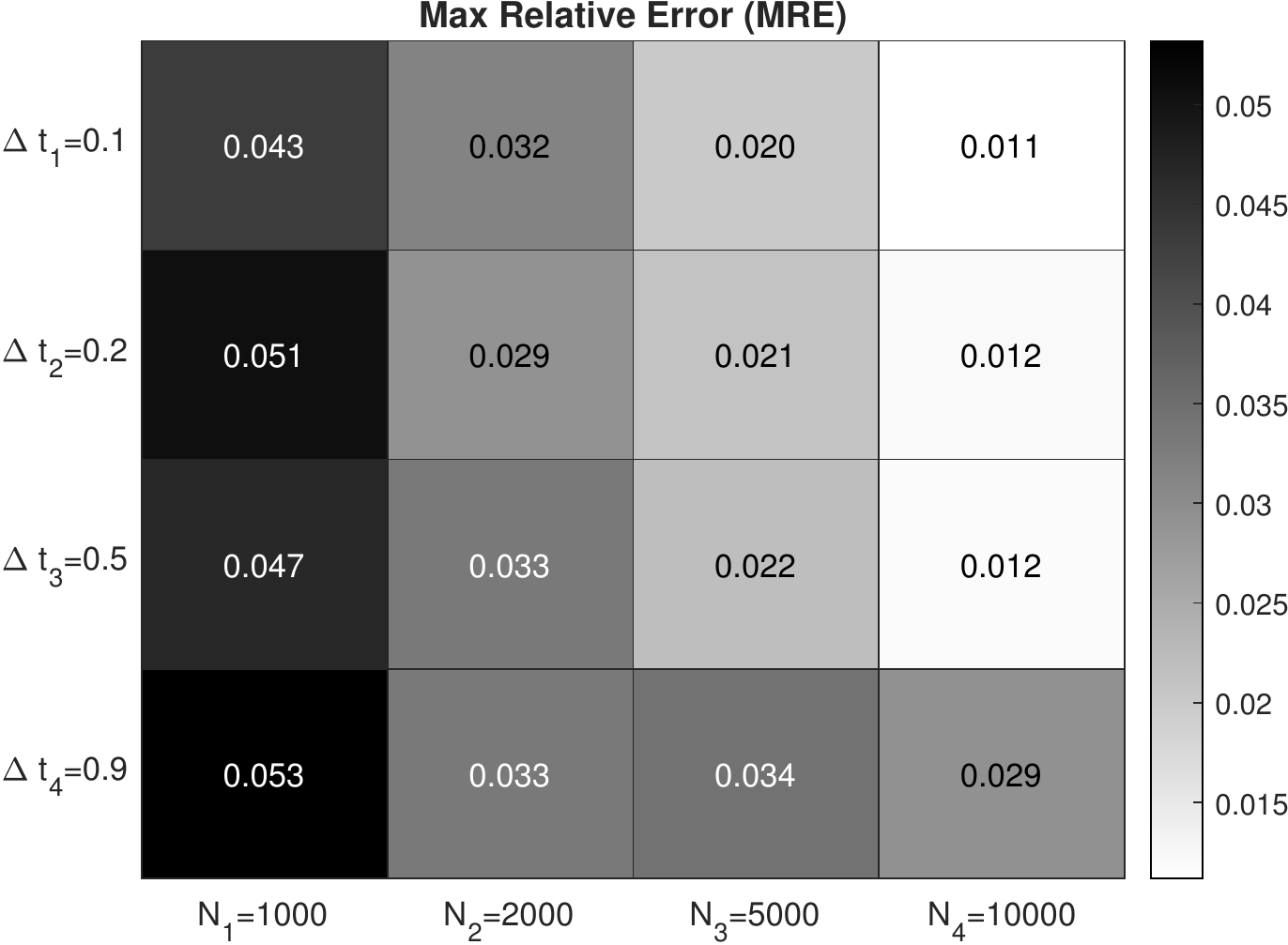}}\hspace{2em}
\subfigure[]{\includegraphics[width=0.5\textwidth]{./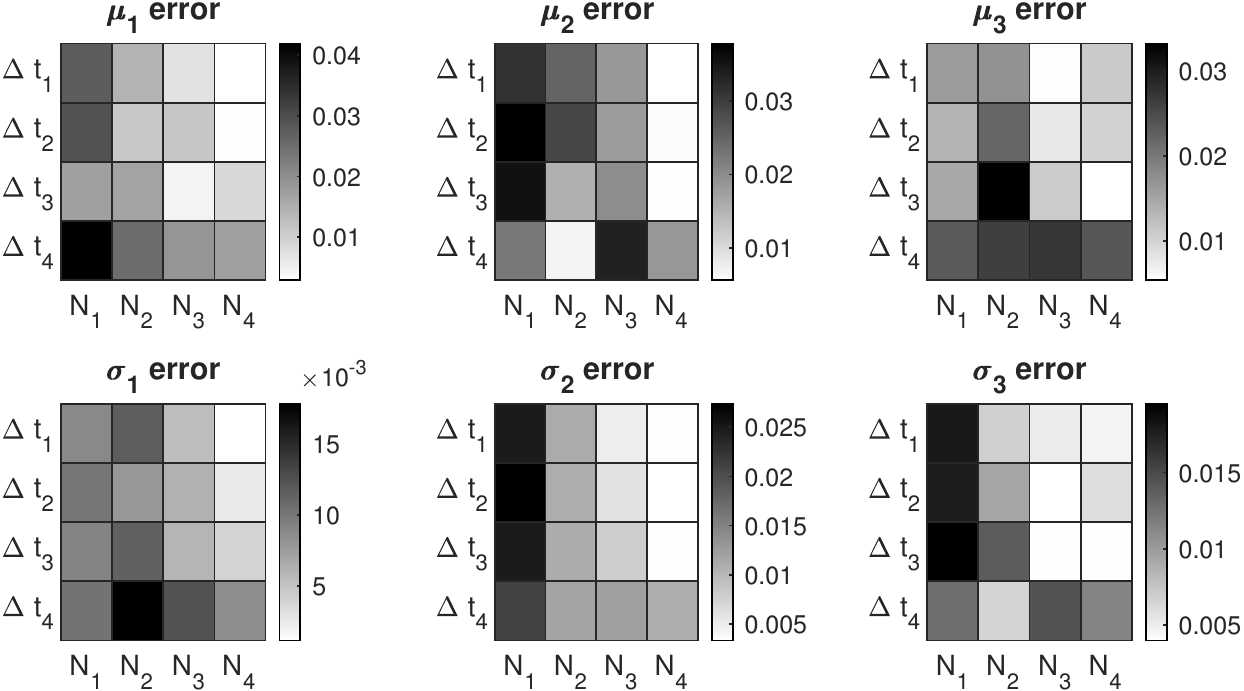}}
\caption{The heat map of the errors of the drift coefficients and diffusion for the 3-dimensional stochastic process in $T=[0, 10]$ with different trajectory samples $N_1=1000$, $N_2=2000$, $N_3=5000$, $N_4=10000$ and different time interval $\Delta t_1=0.1$, $\Delta t_2=0.2$, $\Delta t_3=0.5$, and $\Delta t_4=0.9$. The true governing stochastic equation reads $dx_1 = -0.5x_1dt + dW_1$, $dx_2 = -0.7x_2dt + dW_2$, and $dx_3 = -x_3dt + dW_3$, where the coefficients of $\mu_1$, $\mu_2$, and $\mu_3$ are $-0.5$, $-0.7$ and $-1$, and the diffusion $\sigma_1=\sigma_2=\sigma_3=1$. (a) shows the max relative error of the coefficients for $N_i$ and $\Delta t_j$; (b) shows the relative error of per coefficient for $N_i$ and $\Delta t_j$ with $i,j=1,2,3,4$.
} \label{fig.3d.linear}
\end{figure}

\textbf{Noise.}  In this part, we consider the performance of our WCR method on the noisy data. The raw data are obtained by the same procedure as the above and  the random noise is added with the noise level $\delta$
\begin{equation*}
\hat{\bf{x}}_i^j = \bf{x}_{i}^j + \delta\mathcal{U}_i^j\bf{x}_{i}^j,
\end{equation*}
where $\mathcal{U}_i^j$ is an uniform random variable in $[-1, 1]$. 
Here the data with 10000 samples in each snapshot are used and time intervals $\Delta t=0.1$ are chosen to show the robustness of our methods on the noise. 

The results are listed in Table \ref{tab.3d.linear} and the Max Relative Error reaches around 0.6\% with the white noise level $\delta=10\%$ for both two cases. Further, when we dive into the detail of the accuracy for drift and diffusion terms respectively, adding noise to the data didn't change the accuracy of the diffusion term much unlike the drift term.

\begin{table}[htp]
    \centering
    \begin{tabular}{cccccc}
         \toprule
         Noise & $\delta=0\%$ & $\delta=10\%$ & $\delta=20\%$ & $\delta=30\%$ & $\delta=40\%$ \\
         \midrule
         MRE & 0.76\% & 0.63\% & 1.92\% & 5.62\% & 11.69\% \\
        %  \midrule
         MRE in drift& 0.49\% & 0.54\% & 1.92\% & 5.62\% & 11.69\% \\
         MRE in diffusion & 0.76\% & 0.63\% & 0.28\% & 0.94\% & 2.36\% \\
         \bottomrule
    \end{tabular}
    \caption{The results of Three-dimensional linear problem with different time snapshot and noise. Reveal the unknown drift and diffusion terms with $\Delta t=0.1$ and $\delta=0,10,20,30,40\%$, where $10000$ samples each snapshot are generated by the given SDE with drift term $\mu(x,y,z)=[-0.5x,-0.7y,-z]^T$ and diffusion term $\sigma=I_{3\times 3}$.}
    \label{tab.3d.linear}
\end{table}

\section{Conclusion}

In this work, leveraging the weak form of the Fokker-Planck equation and the collocations of the Gaussian kernels, we proposed a framework called the Weak Collocation Regression method (WCR) to fast reveal the hidden stochastic dynamics from high-dimensional aggregate data. The lack of trajectory information makes aggregate data (unpaired data) more difficult to reveal the hidden dynamics. However, the data distribution follows the Fokker-Planck equation under some assumptions, such as the Brownian motion. By transferring the spatial derivatives to the test function in the weak form of the Fokker-Planck equation, we have an integral form of the density function with derivatives of the test functions. Thus the integral form can be easily approximated by summing the derivatives of the test functions over the samples. Using collocations of the Gaussian functions as the test functions and the dictionary representation of the unknown terms, we build an extensive linear system. Linear regression and sparse identification lead to unknown terms, thus revealing the hidden dynamics.

Our experiments show that WCR is numerically efficient, taking only $0.02s$ on the Macbook Pro for the $1$-dimensional problem with remarkable accuracy and only seconds for $3$-dimensional and $4$-dimensional problems. With the LMMs of variable step size, WCR achieves good accuracy in the non-equally spaced time data. Approximating the integral form using the summations over samples makes the WCR method naturally relieve the curse of dimensionality. And the extra computational cost is required on the collocations of the kernels but does not exponentially increase with dimension. The variable-dependent diffusion problem and coupled drift terms usually bring more difficulties, but the WCR method exhibits high accuracy in these complex problems. And for the data with noise, the WCR method also obtains a stable performance when adding the white noise. With different time intervals and samples, WCR shows its robustness without losing too much accuracy when the data quality worsens.

When comparing with the methods presented in the literature, \cite{ma2021learning} employed SDE to generate data at encrypted time points and optimized it using the Wasserstein distance. Similarly, \cite{yang2022generative} used SDE to artificially augment data and compared its distance with existing data using GANs. On the other hand, \cite{chen2021solving} incorporated the residual of the FP equation across the entire space and time into the objective function, computing the derivatives through auto-differentiation. In contrast, our methods directly solve the algebraic equations of unknown terms using the available data. The accuracy of the results obtained greatly relies on the precision of the derivatives approximated by the data. However, by utilizing Gaussian functions as test functions in the weak form, we significantly alleviate this issue as it eliminates the need for spatial derivatives. Consequently, the only remaining task is to accurately approximate the temporal derivatives of the one-dimensional sequence, as shown in equation \eqref{eq.monte}. It is important to note that the accuracy still depends on the precision of the temporal derivatives, necessitating relatively dense snapshots over time.

Despite the advantages mentioned above, the Weak Collocation Regression method still has some limitations for investigation in future work. Our approach slows down for the ultimately high-dimensional problems such as 100 dimensions since more Gaussian kernel collocations are needed, requiring high computational costs. More effective sampling schemes, such as active learning for the collocation of the Gaussian kernels, are necessary to reduce the computational cost. Neural networks might be another choice for the test function with a min-max optimization for reducing the cost. If prior information about the problem is available, it can be effectively incorporated into the dictionary, thereby mitigating the curse of dimensionality. However, if we lack prior information regarding the high-dimensional dynamics, expanding the unknown terms in a general basis form would lead to a significant increase in the number of terms. Consequently, traditional linear regression may prove less efficient. To address this issue, it may be worth considering a neural network approximation for the unknown terms. By adopting this approach, our framework remains intact, albeit with the substitution of linear regression for machine learning optimization schemes like gradient descent. However, this particular modification is left as an avenue for future research and exploration. Further discussion about the general form besides the Brownian motion is needed but also left for future work.

\section*{Acknowledgements}
This work was supported by the National Key R\&D Program of China (Grant No. 2021YFA0719200) and the National Natural Science Foundation of China (Grant No. 92370125). The authors would like to thank the helpful discussions from Dr. Liu Hong.

\bibliography{SDE_kernel.bib}
\bibliographystyle{unsrt}

\appendix

\section*{Appendix}

\section{Error Analysis}\label{app.error}

In this section, we are going to analyze the error of the Weak Collocation Regression (WCR) method. The key ingradients in our error analysis are inspired by \cite{messenger2022learning, messenger2022asymptotic}. The error in our method mainly consists of the following aspects: (a) approximations of the integrals; (b) approximations of the temporal derivative; (c) the linear regression. Briefly speaking, the error in computing the integrals by Monte Carlo method is $\mathcal{O}(\frac{1}{\sqrt{N}})$, and the error in approximating the temporal derivative by the linear multistep method is $\mathcal{O}(\Delta t^\alpha)$. In this section, our primary focus lies in examining the influence of sample size $N$ and the time interval $\Delta t$ on the error. The order of the linear multistep method is denoted as $\alpha$, and the ordinary least squares method is used.

In a general context, we represent the linear multistep method for the differential equation $\frac{d}{dt}\boldsymbol y(t)=\boldsymbol f(t)$ as $\boldsymbol y_{n+k} = \sum_{i = 0}^{k-1}\alpha_i \boldsymbol y_{n+i}+\Delta t \sum_{i = 0}^k \beta_i \boldsymbol f_{n+i}$. Consequently, we define the temporal discrete operator for the left-hand side as $\mathcal{D}_t^L \boldsymbol y(t):= \frac{1}{\Delta t}(\boldsymbol y(t+k\Delta t) - \sum_{i=0}^{k-1}\alpha_i \boldsymbol y(t+i\Delta t))$, while the right-hand side is represented as $\mathcal{D}_t^R \boldsymbol f(t,y):= \sum_{i = 0}^k  \beta_i \boldsymbol f(t+i\Delta t,y(t+i\Delta t))$. Once the library $\Lambda=\{\Lambda_1(\boldsymbol x), \Lambda_2(\boldsymbol x),\cdots,\Lambda_b(\boldsymbol x)\}$ and the test functions $\{\psi_m\}_{m=1}^M$ are fixed, the true value $\overline{\Abf}\in\mathbb{R}^{M(L-k)\times(db+d^2b)}$ of the coefficient matrix $\Abf\in\mathbb{R}^{M(L-k)\times(db+d^2b)}$ is determined as follows: 
\begin{equation}
    \overline{\Abf}_{m}^{ij} = \left\{
    \begin{aligned}
    & \sum_{l=0}^k \beta_l \int_{\mathbb{R}^d} p(t_i,\boldsymbol x)\Lambda_j(\boldsymbol x)\cdot \nabla\psi_m(\boldsymbol x) d\boldsymbol x, \ 1\leq j\leq db \\
    & \sum_{l=0}^k \beta_l \int_{\mathbb{R}^d} \sum_{r,s=1}^d p(t_i,\boldsymbol x)\Lambda_j(\boldsymbol x)\frac{\partial}{\partial x_rx_s}\psi_m(\boldsymbol x) d\boldsymbol x, \ db+1\leq j\leq db+d^2b
    \end{aligned}\right.,
    \overline{\Abf} = 
    \begin{pmatrix}
       \overline{\Abf}_1 \\ \overline{\Abf}_2 \\ \cdots \\ \overline{\Abf}_M  
    \end{pmatrix},
\end{equation}
where $\overline{\Abf}_m:=(\overline{\Abf}_m^{ij})\in\mathbb{R}^{(L-k)\times(db+d^2b)}$ is the block within $\overline{\Abf}$ derived from test function $\psi_m$, and the operation $\sum_{l=0}^k \beta_l\times(\cdot)$ represents the linear multistep method. Importantly, it's noteworthy that the true value $\overline{\Abf}$ remains unaffected by both sample number $N$ and time interval $\Delta t$, thus establishing its independence from the chosen computational format. Subsequently, we list the assumptions for the problem.

\begin{assumption}\label{assump1}
\hrule
\begin{enumerate}
\item The aggregate data $\mathbb{X} = \{\{\bx^i_{t_j}\}_{i=1}^{N}\}_{j=1}^L$ is observed from a strong solution to \eqref{eq.sde} for $t_j\in[0,T]$ with constant sample number.
\item The probability density function $p(x,t)$ is smooth enough in the time direction. $\{\frac{\partial^i}{\partial t^i}p(x,t)\}_{i=1}^{\alpha+1}$ is uniformly bounded with respect to $x$.
\item The test functions $\{\psi_k\}_{k=1}^M$ belong to the Schwartz space 
$$ \mathcal{S}(\mathbb{R}^d):=\{f\in C^\infty(\mathbb{R}^d)|\sup_{x\in\mathbb{R}^d}|x^\alpha D^\beta f(x)|<\infty, \forall \alpha, \beta\} $$
The functions in library $\Lambda$ have algebraic growth. The test functions together with the library $\Lambda$ are such that the true value matrix $\overline{\Abf}$ has full column rank\footnote{It is challenging to guarantee the existence of a library in theory such that the true value matrix $\overline{\Abf}$ has the full column rank. In practice, the number of rows of $\overline{\Abf}$ is much greater than the number of its columns. And we indeed observe this feature and that $\overline{\Abf}$ being with the full column rank consistently in all our experiments. The interpretation regarding this particular assumption can also be found in Remark 2.4 of \cite{messenger2022asymptotic}.}. There exist a universal bound $C_1$ such that $\forall \psi(\boldsymbol x)\in \mathbb C_{\rho,\gamma}^d ,\int_{\mathbb{R}^d}|\psi(\boldsymbol x)|dx\leq C_1$. 
\item Moreover, $\nrm{\overline{\Abf}^\dagger}_\infty\leq C_{2}$ almost surely, where $\nrm{\overline{\Abf}^\dagger}_\infty$ is the induced matrix $\infty$-norm of $\overline{\Abf}^\dagger$.
\item The true functions $\boldsymbol \mu^\star$ and $D^\star$ are in the span of $\Lambda$.

\end{enumerate}
\hrule
\end{assumption}

The coefficient matrix $\Abf\in\mathbb{R}^{M(L-k)\times(db+d^2b)}$ used in linear regression is provided by the following expression:
\begin{equation}
    \Abf^{ij}_m = \left\{
    \begin{aligned}
    & \sum_{l=0}^k \beta_l\left(\frac{1}{N}\sum_{n=1}^N \Lambda_j(x_{t_i}^n) \cdot \nabla\psi_m(x_{t_i}^n)\right), \ 1\leq j\leq db \\
    & \sum_{l=0}^k \beta_l\left(\frac{1}{N}\sum_{n=1}^N \sum_{r,s=1}^d \Lambda_j(x_{t_i}^n)\frac{\partial}{\partial x_rx_s}\psi_m(x_{t_i}^n)\right), \ db+1\leq j\leq db+d^2b
    \end{aligned}\right.,
    \Abf = 
    \begin{pmatrix}
       \Abf_1 \\ \Abf_2 \\ \cdots \\ \Abf_M  
    \end{pmatrix},
\end{equation}
where $\Abf_m:=(\Abf_m^{ij})\in\mathbb{R}^{(L-k)\times(db+d^2b)}$ is the block within $\Abf$ derived from test function $\psi_m$, and $x_{t_i}^n$ represents the n-th sample at time $t_i$. Here the coefficient matrix $\Abf$ exhibits a relationship with sample number $N$, but it remains independent of the time interval $\Delta t$. It becomes evident that the size of matrix $\Abf$ and $\overline{\Abf}$ are congruent, with the difference in corresponding elements quantified as $\overline{\Abf}^{ij}-\Abf^{ij}=\mathcal{O}(\frac{1}{\sqrt{N}})$. Subsequently, relying on assumption \ref{assump1}(3), it follows that $\Abf$ also attains full column rank for sufficiently large values of $N$. Likewise, in the ordinary least squares method, $\Abf^\dagger:=(\Abf^T\Abf)^{-1}\Abf^T$, which also implies $\|\Abf^\dagger\|_\infty=\| \overline{\Abf}^\dagger \|_\infty+\mathcal{O}(\frac{1}{\sqrt{N}})$. Therefore, in accordance with assumption \ref{assump1}(4), we can deduce that $\nrm{\Abf^\dagger}_\infty\leq 2C_{2}$ for sufficiently large values of $N$.

Given a test function $\psi(\boldsymbol x)$ and any probability density function $\rho(\boldsymbol x ,t)$, define the continuous-time weak-form residual
\begin{equation}%b\label{eq.integral}
\begin{split}
\mathcal R(\rho(\boldsymbol x,t),\psi(\boldsymbol x)) =\int_{\mathbb{R}^d} \frac{\partial}{\partial t}\rho(\boldsymbol x,t)\psi(\boldsymbol x)d\boldsymbol x
- \int_{\mathbb{R}^d} \rho(\boldsymbol x,t)\bf{\mu}(\boldsymbol x)\cdot \nabla \psi(\boldsymbol x)d\boldsymbol x -\int_{\mathbb{R}^d} \rho(\boldsymbol x,t)\sum_{rs}^dD_{sr}\partial_{rs}\psi(\boldsymbol x)d\boldsymbol x,
\end{split}
\end{equation}
If $\rho(x,t) = p(x,t)$ is a weak solution to \eqref{eq.fp} and then the residual $\mathcal{R}(\rho(\boldsymbol x,t),\psi(\boldsymbol x))=0$ for any $t$ and test function $\psi$.To incorporate discrete effects, we introduce the discrete-time weak-form residual $\mathcal R_{\Delta t}(\rho(\boldsymbol x,t),\psi(\boldsymbol x))$ by replacing the differential operator with the LMM discret 
\begin{equation}%\label{eq.integral}
\begin{split}
\mathcal R_{\Delta t}(\rho(\boldsymbol x,t),\psi(\boldsymbol x)) =\mathcal{D}_t^L \int_{\mathbb{R}^d}   \rho(\boldsymbol x,t)\psi(\boldsymbol x)d\boldsymbol x
- \mathcal{D}_t^R[\int_{\mathbb{R}^d} \rho(\boldsymbol x,t)\bf{\mu}(\boldsymbol x)\cdot \nabla \psi(\boldsymbol x)d\boldsymbol x + \rho(\boldsymbol x,t)\sum_{rs}^dD_{sr}\partial_{rs}\psi(\boldsymbol x)d\boldsymbol x],
\end{split}
\end{equation}
Next, we estimate the errors induced by the discrete-time derivative and Monte Carlo computation of expectations respectively. And the final error analysis is stated in Theorem \ref{thm:w} by incorporating the linear regression.

\vspace{1em}
\textbf{Step1: Estimate the error of linear multistep method}

We first estimate the approximation error of discrete time derivative, which we denoted as $\mathcal E_1(\rho(\boldsymbol x,t),\psi(\boldsymbol x))$ for convenience  
\begin{equation}%\label{eq.integral}
\begin{split}
\mathcal E_1(\rho(\boldsymbol x,t),\psi(\boldsymbol x))=\mathcal R(\rho(\boldsymbol x,t),\psi(\boldsymbol x))-\mathcal R_{\Delta t}(\rho(\boldsymbol x,t),\psi(\boldsymbol x)).
\end{split}
\end{equation}
Using assumption \ref{thm:w}(2) we can deduce the truncation error
\begin{equation}
    \vert\partial_t p(\boldsymbol x,t) - \mathcal{D}_t^L p(\boldsymbol x,t) +(\mathcal{D}_t^R  p(\boldsymbol x,t)- p(\boldsymbol x,t) )\bf{\mu}(\boldsymbol x)\cdot \nabla \psi(\boldsymbol x) +(\mathcal{D}_t^R  p(\boldsymbol x,t)- p(\boldsymbol x,t) )\sum_{rs}^dD_{sr}\partial_{rs}\psi(\boldsymbol x)|\leq C_3 \Delta t^\alpha
\end{equation}
where $C_3$ is independent of $\Delta t$, and directly implies the order of approximation error 

\[
\begin{aligned}
\left\vert
\mathcal E_1(p(\boldsymbol x,t),\psi(\boldsymbol x))
\right\vert& \leq 
\vert\int_{\mathbb{R}^d}|\psi(\boldsymbol x)| \cdot\vert\partial_t p(\boldsymbol x,t) - \mathcal{D}_t^L p(\boldsymbol x,t) \\
&+(\mathcal{D}_t^R  p(\boldsymbol x,t)- p(\boldsymbol x,t) )\bf{\mu}(\boldsymbol x)\cdot \nabla \psi(\boldsymbol x) \\
& +(\mathcal{D}_t^R  p(\boldsymbol x,t)- p(\boldsymbol x,t) )\sum_{rs}^dD_{sr}\partial_{rs}\psi(\boldsymbol x)| d\boldsymbol x\\
&\leq\int_{\mathbb{R}^d}|\psi(\boldsymbol x)| \cdot C_3\Delta t^\alpha d\boldsymbol x \leq C_{4}\Delta t^\alpha
\end{aligned}
\] 
Since $\psi$ belongs to the Schwartz space , the spatial integration is bounded. Here $C_4$ is dependent of the test function $\psi$, but independent of $\Delta t$.

\vspace{1em}
\textbf{Step2: Estimate the error of Monte Carlo method}

Then we consider the difference between exact probability density function $p(x,t)$ and empirical density function $p^N(x,t)$ with discrete temporal differential operator. Similarly, the approximation error is denoted as $\mathcal E_2(p(\boldsymbol x,t),p(\boldsymbol x))$
\begin{equation}%\label{eq.integral}
\begin{split}
\mathcal E_2(\rho(\boldsymbol x,t),\psi(\boldsymbol x))=\mathcal R_{\Delta t}(\rho(\boldsymbol x,t),\psi(\boldsymbol x))-\mathcal R_{\Delta t}(\rho^N(\boldsymbol x,t),\psi(\boldsymbol x)).
\end{split}
\end{equation}

% \begin{lemma}\label{discreteeffects}
% For $C$ independent of $N$ and $\Delta t$, it holds that
% \[\Ebb\left[|\mathcal{L}(p^N(x,t),\psi, \lan \cdot,\cdot\ran_{\Delta t})-\mathcal{L}(p(x,t), \psi, \lan\cdot,\cdot\ran_{\Delta t})|\right]\leq C\frac{1}{\Delta t \sqrt N}.\]
% \end{lemma}

Then
\[
\begin{aligned}
\left\vert
\mathcal E_2(p(\boldsymbol x,t),\psi(\boldsymbol x))
\right\vert& \leq 
\vert\int_{\mathbb{R}^d}|\psi(\boldsymbol x)|\cdot \vert \mathcal{D}_t^L p(\boldsymbol x,t) - \mathcal{D}_t^L p^N(\boldsymbol x,t) \\
&+\mathcal{D}_t^R  \left(p^N\left(\boldsymbol x,t\right)-p\left(\boldsymbol x,t\right)\right) \bf{\mu}(\boldsymbol x)\cdot \nabla \psi(\boldsymbol x) +\mathcal{D}_t^R\left(p^N\left(\boldsymbol x,t\right)-p\left(\boldsymbol x,t\right)\right)\sum_{rs}^dD_{sr}\partial_{rs}\psi(\boldsymbol x) 
\vert  d\boldsymbol x \\
\end{aligned}
\]

which can be decomposed of two type of term. The first type is the inner product of test function and probability density function with discrete temporal differential operator.  Hence, by setting $\alpha_k =1$, the first term can be estimated by 
\[
\begin{aligned}
&|\int_{\mathbb{R}^d}\psi(\boldsymbol x)\mathcal{D}_t^L p(\boldsymbol x,t)  
-\psi(\boldsymbol x)\mathcal{D}_t^L p^N(\boldsymbol x,t) d\boldsymbol x|\\
\leq& \sum_{i = 0}^k\left\vert \alpha_i\right\vert \left\vert\int_{\Omega }\psi(\boldsymbol x)\frac{p(\boldsymbol x,t+i\Delta t) -p^N (\boldsymbol x,t+i\Delta t)}{\Delta t}  d\boldsymbol x\right\vert \triangleq I_1,
\end{aligned}
\]
by the result of Theorem 1.1 of \cite{bolley2011stochastic} ,we have
\[
    \Ebb \left\vert\int_{\Omega }\psi(\boldsymbol x)\left(p(\boldsymbol x,t) -p^N (\boldsymbol x,t)\right) d\boldsymbol x\right\vert \leq \frac{C_5}{\sqrt{N}}
\]
with $C_5$ depending on test function $\psi$, but independent with $N,t,\Delta t$. So
\[
\Ebb [I_1] \leq \frac{C_{6}}{\sqrt{N}\Delta t}.
\]
Here $C_6$ is independent of $\Delta t,N$, and dependent of test function $\psi$. The other terms are of the form $|\int_{\mathbb{R}^d}\Phi(\boldsymbol x,t)(p(\boldsymbol x,t) - p^N(\boldsymbol x,t))d\boldsymbol x|$, and thus the expectation is $\mathcal O(\frac{1}{\sqrt{N}})$.

After further accounting for the error from linear regression, Theorem \ref{thm:w} provides the final error estimation for this method.

\setcounter{theorem}{0}
\begin{theorem}\label{thm:w}
Let $\hat{\boldsymbol{\zeta}}=\Abf^\dagger\bbf\:=(\Abf^T\Abf)^{-1}\Abf^T\bbf$ be the learned model coefficients and $\boldsymbol\zeta^\star$ the true model coefficients. There exists $C$ independent of $N$ and $\Delta t$ such that
\[\Ebb\left[\nrm{\hat{\boldsymbol{\zeta}}-\boldsymbol\zeta^\star}_\infty\right] \leq C(\frac{1}{\sqrt{N}\Delta t}+\Delta t^\alpha).\]
\end{theorem}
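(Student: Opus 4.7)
The plan is to reduce the error on the coefficients to the weak-form residual of the true dynamics evaluated on the data, then invoke the two decoupled error bounds already developed as Step 1 and Step 2. Write $\overline{\bbf}$ for the ideal right-hand side built from the exact density $p(\boldsymbol x,t)$ and the continuous-time derivative, so that by Assumption \ref{assump1}(5) the true coefficients satisfy $\overline{\Abf}\boldsymbol{\zeta}^\star=\overline{\bbf}$. Starting from the identity
\[
\hat{\boldsymbol{\zeta}}-\boldsymbol{\zeta}^\star=\Abf^\dagger\bbf-\boldsymbol{\zeta}^\star=\Abf^\dagger(\bbf-\Abf\boldsymbol{\zeta}^\star),
\]
I would take the induced $\infty$-norm and obtain $\nrm{\hat{\boldsymbol{\zeta}}-\boldsymbol{\zeta}^\star}_\infty \leq \nrm{\Abf^\dagger}_\infty\,\nrm{\bbf-\Abf\boldsymbol{\zeta}^\star}_\infty$.

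Next I would show that each coordinate of the residual $\bbf-\Abf\boldsymbol{\zeta}^\star$ is precisely a discrete-time, Monte-Carlo weak-form residual $\mathcal R_{\Delta t}(p^N,\psi_m)$ evaluated at one of the time stencils, since by construction the $\boldsymbol{\zeta}^\star$-weighted linear combination of the dictionary blocks reproduces $\boldsymbol\mu\cdot\nabla\psi_m$ and the diffusion term. Using $\mathcal R(p,\psi_m)=0$ (true dynamics are a weak solution of the FP equation) I would split
\[
\mathcal R_{\Delta t}(p^N,\psi_m) = \underbrace{\bigl[\mathcal R_{\Delta t}(p^N,\psi_m)-\mathcal R_{\Delta t}(p,\psi_m)\bigr]}_{-\mathcal E_2}+\underbrace{\bigl[\mathcal R_{\Delta t}(p,\psi_m)-\mathcal R(p,\psi_m)\bigr]}_{-\mathcal E_1}.
\]
Step 1 in the excerpt already gives $|\mathcal E_1|\leq C_4\Delta t^\alpha$ from the LMM truncation estimate together with the integrability of the Schwartz test functions, while Step 2 gives $\Ebb|\mathcal E_2|\leq C_6/(\sqrt N \Delta t)+\mathcal O(1/\sqrt N)$, where the dominant $1/(\sqrt N\Delta t)$ term comes from the empirical-versus-true density error in the discrete left-hand-side derivative $\mathcal D_t^L$ (dividing $\mathcal O(1/\sqrt N)$ by $\Delta t$), and the subleading term is the collection of Monte-Carlo errors appearing without the $1/\Delta t$ factor on the right-hand-side terms.

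The remaining ingredient is to handle the random factor $\nrm{\Abf^\dagger}_\infty$ inside the expectation. The argument just above the theorem in the appendix observes that the entries of $\Abf-\overline{\Abf}$ are Monte-Carlo errors of size $\mathcal O(1/\sqrt N)$, and combines this with Assumption \ref{assump1}(4) via a standard perturbation bound on the pseudoinverse to conclude $\nrm{\Abf^\dagger}_\infty\leq 2C_2$ almost surely for $N$ large enough. I would therefore pull this deterministic constant out of the expectation and combine it with the Step 1 and Step 2 estimates, maximising over the (finite) set of stencil indices $i$ and test functions $\psi_m$ to absorb all $m$-dependent constants into a single $C$. This yields
\[
\Ebb\bigl[\nrm{\hat{\boldsymbol{\zeta}}-\boldsymbol{\zeta}^\star}_\infty\bigr]\leq 2C_2\bigl(C_4\Delta t^\alpha+ C_6/(\sqrt N\Delta t)+\mathcal O(1/\sqrt N)\bigr),
\]
and the last term is absorbed into $C_6/(\sqrt N\Delta t)$, giving the claim.

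The main obstacle I anticipate is the rigorous control of $\nrm{\Abf^\dagger}_\infty$ in expectation rather than almost surely: naively one has no integrable envelope for $\nrm{\Abf^\dagger}_\infty$ on the event where $\Abf$ is nearly rank-deficient. The cleanest fix is to work on the high-probability event that $\nrm{\Abf-\overline{\Abf}}_\infty$ is smaller than a constant multiple of $1/\nrm{\overline{\Abf}^\dagger}_\infty$, use a Neumann-series bound for $(\Abf^T\Abf)^{-1}$ there, and argue that the complementary event has probability decaying fast enough in $N$ that its contribution is negligible compared with $1/(\sqrt N \Delta t)$. The other technical nuisance is verifying that the Monte-Carlo rate from Theorem 1.1 of the Bolley--Guillin--Villani paper applies with a constant uniform in the time variable, which requires Assumption \ref{assump1}(2)--(3) (smoothness of $p$ in time, algebraic growth of the dictionary, Schwartz test functions) so that the test functional appearing in the weak form lies in the class for which the $1/\sqrt N$ rate holds.
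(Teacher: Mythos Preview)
Your proposal is correct and follows essentially the same route as the paper: write $\hat{\boldsymbol\zeta}-\boldsymbol\zeta^\star=\Abf^\dagger(\bbf-\Abf\boldsymbol\zeta^\star)$, identify each entry of $\bbf-\Abf\boldsymbol\zeta^\star$ with the discrete weak-form residual $\mathcal R_{\Delta t}(p^N,\psi_m)$, split it as $-\mathcal E_1-\mathcal E_2$ using $\mathcal R(p,\psi_m)=0$, invoke the Step~1 and Step~2 bounds, and pull out the pseudoinverse via $\nrm{\Abf^\dagger}_\infty\le 2C_2$. The obstacle you flag about controlling $\nrm{\Abf^\dagger}_\infty$ in expectation rather than almost surely is real but is also glossed over in the paper's own argument, which simply asserts the bound ``for sufficiently large $N$'' and treats it as deterministic.
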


\begin{proof}
Using that $\boldsymbol \mu^\star$and $D^\star$ are in the span of $\mathbb C_{\rho,\gamma}^d$, we have that 
\[\bbf_k =\mathcal{D}_t^L\int_{\mathbb{R}^d}\psi_k(\boldsymbol x) p^N(\boldsymbol x,t)dx   = \mathcal R_{\Delta t}(p^N(\boldsymbol x,t),\psi_k(\boldsymbol x)) + \Abf^T_k \boldsymbol \zeta^\star:=\Rbf_k +\Abf^T_k \boldsymbol \zeta^\star,\]
where $\Abf^T_k$ is the $k$th row of $\Abf$. From the previous result, we have

\[
\begin{aligned}
\Ebb\left[|\Rbf_k|\right]&\leq \Ebb\left[|\mathcal E_2(p(\boldsymbol x,t),\psi_k(\boldsymbol x))|\right]+ \Ebb\left[|\mathcal E_1(p(\boldsymbol x,t),\psi_k(\boldsymbol x))|\right] + \Ebb\left[|\mathcal R(p(\boldsymbol x,t),\psi_k(\boldsymbol x))| \right]\\
&\leq C'\left( \Delta t^\alpha+\frac{1}{\sqrt{N} \Delta t}\right),
\end{aligned}\]
and $C'$ is independent of $N$ and $\Delta t$. Based on the previously derived results that $\Abf$ has full column rank, it holds that $\hat{\boldsymbol{\zeta}}=\Abf^\dagger\bbf=\Abf^\dagger\Rbf +\boldsymbol\zeta^\star$, hence the result follows from the uniform bound on $\nrm{\Abf^\dagger}_\infty$:
\[\Ebb\left[\nrm{\hat{\boldsymbol{\zeta}}-\boldsymbol\zeta^\star}_\infty\right]\leq \nrm{\Abf^\dagger}_\infty\Ebb\left[\nrm{\Rbf}_\infty\right]\leq C'\nrm{\Abf^\dagger}_\infty \left( \Delta t^\alpha+\frac{1}{\sqrt N \Delta t}\right)\leq 2C'C_2 \left( \Delta t^\alpha+\frac{1}{\sqrt N \Delta t}\right).\]
\end{proof}
% Under the assumption that $\boldsymbol\mu^\star$ and $D^\star$ are contained in the span of $\mathbb C_{\rho,\gamma}^d$, an immediate corollary is 
% \[\Ebb\left[\nrm{\boldsymbol\mu^\star- \widehat{\boldsymbol\mu}}_{L^2(\Omega)}+\nrm{\nrm{D^\star-\widehat{D}}_F}_{L^2(\Omega)}\right]\leq C \left( \Delta t^\alpha +\frac{1}{\sqrt{N} \Delta t}\right).\]

For fixed sample number $N$ , the observation time interval $\Delta t$ is optimally chosen as $N^{-\frac{1}{2(\alpha-1)}}$ so that the error is $\mathcal O(\frac{1}{\sqrt{N}} )$.

\section{Computational complexity analysis}\label{app.complexity}
In Weak Collocation Regression method, the explicit form of Gaussian function taken as test function and its first and second derivatives can be calculated in advance.
\begin{equation}
    \phi(\bx, \bf{\rho}, \bf{\gamma}) = \Pi_{i=1}^d \frac{1}{\gamma_i \sqrt{2 \pi}} e^{-\frac{1}{2}\left(\frac{x_i - \rho_i}{\gamma_i}\right)^{2}},
\end{equation}
\begin{equation} \label{eq.gauss1}
    \frac{\partial}{\partial x_i} \phi(\bx, \bf{\rho}, \bf{\gamma}) = -\phi(\bx, \bf{\rho}, \bf{\gamma}) \cdot \frac{x_i-\rho_i}{\gamma_i^2}
\end{equation}
\begin{equation} \label{eq.gauss2}
    \frac{\partial^2}{\partial x_i \partial x_j} \phi(\bx, \bf{\rho}, \bf{\gamma}) = 
    \begin{cases}
        \phi(\bx, \bf{\rho}, \bf{\gamma}) \left((\frac{x_i-\rho_i}{\gamma_i})^2 - \frac{1}{\gamma_i^2}\right) & i=j \\
        \phi(\bx, \bf{\rho}, \bf{\gamma})\cdot\frac{x_i-\rho_i}{\gamma_i^2}\cdot\frac{x_j-\rho_j}{\gamma_j^2} & i\neq j
    \end{cases}
\end{equation}
Therefore, the computational complexity of $\phi(\bx, \bf{\rho}, \bf{\gamma})$ is $\mathcal{O}(LNd)$ , where $L,N,d$ stands for number of snapshots, sample number and dimension respectively. Thanks to the object-oriented characteristics of python, it is not necessary to recompute  $\phi(\bx, \bf{\rho}, \bf{\gamma})$ in \eqref{eq.gauss1}\eqref{eq.gauss2}. % thus there are $\mathcal{O}(d)$ and $\mathcal{O}(d^2)$ additional computational cost respectively.

On computing known coefficient matrix $B(\mathbb{X})$ and column vector $\mathbf{y}(\mathbb{X})$, there are $\mathcal{O}(LNMd)$ and $\mathcal{O}(LNM)$ computational cost respectively, for we expand the diagonal of the diffusion matrix with a fixed number of basis $b$. Here $M$ stands for the number of the Gaussian functions. When applying linear multistep method, we obtain the linear system $(A, \hat{\mathbf{y}})$ via $\mathcal{O}(L)$ additional computational cost.

On computing linear regression and sparse identification, \textit{np.linalg.lstsq} finishes the calculation with QR factorization. The computational complexity of QR factorization is $\mathcal{O}(b^2LM)$, which takes little time because $b$ is a constant, and it is already covered by $\mathcal{O}(LNMd)$ in computing coefficient matrix. A large number of experiments show that linear regression and sparse identification takes a very low proportion of the total execution time, see Table \ref{table.qr}. Therefore, the choice of computing matrix inverse has little effect.
\begin{table}[h]
    \centering
    \begin{tabular}{ccccccc}
        \toprule
        Time(s) & 1d cubic & 1d quintic & 2d sombrero & 3d cubic & 5d cubic & 10d cubic \\
        \midrule
        STRidge & 0.0004 & 0.001 & 0.132 & 0.0006 & 0.003 & 0.016  \\
        Total & 0.0177 & 0.541 & 28.9 & 3.84 & 66.5 & 213.5 \\
        STRidge/Total & 2.26\% & 0.18\% & 0.46\% & 0.02\% & 0.005\% & 0.008\% \\
        \bottomrule
    \end{tabular}
    \caption{STRidge and the total program time under different examples}
    \label{table.qr}
\end{table}

In conclusion, the computational complexity of Weak Collocation Regression method is $\mathcal{O}(LNMd)$ under condition of fixed the step of linear multistep method. It can be found that the 
computational complexity of WCR does not increase exponentially with the problem dimension.

\section{10-dimensional coupled system} \label{app.couple}
To illustrate our performance of WCR method in the high-dimensional coupled system, we consider a 10 dimensional aggregate data generated by the true model 
\begin{equation}
\begin{gathered}
    dX_t = \mu(X_t)dt + \sigma dW_t, \quad X_t\in\mathbb{R}^d \\ 
    \mu(x) = \begin{pmatrix} x_1-x_1^3 \\ x_2-x_2^3 \\ x_3-x_3^3 \\ \cdots \\ x_d - x_d^3 \end{pmatrix}, \quad 
    \sigma = 
    \begin{pmatrix}
        1 & 0 & 0 & 0 & \cdots & 0 \\ 
        1 & 1 & 0 & 0 & \cdots & 0 \\ 
        0 & 1 & 1 & 0 & \cdots & 0 \\ 
        0 & 0 & 1 & 1 & \cdots & 0 \\ 
        \vdots & \vdots & \vdots & \vdots & \ddots & \vdots \\
        0 & 0 & 0 & \cdots & 1 & 1
    \end{pmatrix}
\end{gathered}
\end{equation}
with $d=10$ and 100000 samples using Euler-Maruyama method. Several time snapshots of the data at time $t=0,0.1,0.2,0.3,\cdots,1$ are used as the experiment data. 

In the experiment, the forth order basis is used to approximate the drift term in each dimension as $\mu_i=\theta_0+\theta_1x_i+\theta_2x_i^2+\theta_3x_i^3$, and the diagonal and 1-subdiagonal in diffusion term is approximated by a tunable parameter $\sigma_i$, making totally $6d-1$ tunable parameters to be learned. Milne method is used for the temporal derivatives. 2000 and 10000 Gaussian functions were employed respectively, with a variance of 1 and mean values obtained by sampling from the region of the data using the Latin hypercube sampling method.

It is worth noting that the diffusion term has coupling, which means that the matrix $D=\frac{1}{2}\sigma\sigma^T$ we learn may not necessarily be a symmetric matrix. If we denote $\hat{D}=\frac{1}{2}(D+D^T)$, then $\hat{D}$ is a symmetrical matrix. By performing a Cholesky decomposition on $\hat{D}$, the lower triangular matrix obtained is the output diffusion term $\sigma$. Table \ref{tab.10d} presents the results of the calculation, which indicate that using 10,000 Gaussian functions, our method achieves a maximum relative error accuracy of 6.36\% within 30 minutes. The results here demonstrate that the WCR method is effective in addressing high-dimensional problems with coupling.
\begin{table}[h]
    \centering
    \begin{tabular}{ccccc}
        \toprule
        Gaussian & MRE in drift & MRE in diffusion & MRE & Time(min) \\ 
        \midrule
        $m=2000$ & 11.8\% & 6.22\% & 11.8\% & 5 \\ 
        $m=10000$ & 6.36\% & 4.35\% & 6.36\% & 30 \\ 
        \bottomrule
    \end{tabular}
    \caption{The results of 10-dimensional problem with coupled diffusion term. Reveal the unknown dynamics with $m=2000, 10000$ Gaussian functions, where there are 100000 samples each snapshots.}
    \label{tab.10d}
\end{table}

\section{STRidge Algorithm}

Here we provide the details of the sequential thresholded ridge regression (STRidge) algorithm. In the STRidge method, each linear regression step retains the variables that were not sparsified in the previous regression. And if the original linear equation has $n$ unknowns, the sparse regression operation is performed for a maximum of $n$ iterations. STRidge will terminate directly if either of the following two conditions is met: 1) After a regression step, no additional variables are removed compared to the previous regression; 2) All variables have been removed. For further details of the STRidge algorithm, please refer to Algorithm \ref{alg:STRidge}.

\begin{algorithm}[ht] 
    \caption{STRidge Algorithm for Solving Linear System $A\bm{x}=\bm{b}$} \label{alg:STRidge}
    \SetAlgoLined
    \KwResult{Vector $\bm{x} \in \mathbb{R}^n$ s.t. $A\bm{x} \approx \bm{b}$}
    \KwIn{Coefficient matrix $A \in \mathbb{R}^{m \times n}$, vector $b \in \mathbb{R}^m$, regular terms $\lambda > 0$, threshold $\eta>0$}
    Compute $x$ by ridge regression $x=\argmin_{w}\|A\bm{w}-\bm{b}\|^2+\lambda\|\bm{w}\|^2$, set $p=n$; \\ 
    % Select index set $S^+=\{x>\epsilon\}$, $S^-=\{x\leq\epsilon\}$, set $p=n$; \\
    \While{True}{
        Select index set $S^+=\{x>\eta\}$, $S^-=\{x\leq\eta\}$; \\ 
        \eIf{$\card\{S^+\}=p$}{break}{$p=\card\{S^+\}$}
        \If{$\card\{S^+\}=0$}{break}
        $x[S^-]=0, x[S^+]=\argmin_{w}\|A[:, S^+]\bm{w}-\bm{b}[:, S^+]\|^2+\lambda\|\bm{w}\|^2$
    }
    \If{$S^+\neq\emptyset$}{$x[S^+]=\argmin_{w}\|A[:, S^+]\bm{w}-\bm{b}[:, S^+]\|^2$}
\end{algorithm}

\end{document}